
\documentclass[preprint]{elsarticle}





\usepackage{amsmath,amstext, amsthm,amsfonts,amssymb,amsthm}

\biboptions{sort&compress}

\newtheorem{thm}{Theorem}[section]
\newtheorem{cor}[thm]{Corollary}
\newtheorem{lem}[thm]{Lemma}

\theoremstyle{definition}
\newtheorem{defn}[thm]{Definition}
\theoremstyle{remark}

\numberwithin{equation}{section}

\newcommand{\norm}[1]{\left\Vert#1\right\Vert}

\newcommand{\set}[1]{\left\{#1\right\}}

\newcommand{\CP}{{}^cD_{q,a^+}}
\newcommand{\CM}{{}^cD_{q,{a}^-}}
\newcommand{\CZ}{{}^cD_{q,0^+}}

\newcommand{\ML}{\mathcal{L}}

\newcommand{\be}{\begin{equation}}
\newcommand{\ee}{\end{equation}}
\newcommand{\non}{\nonumber}
\newcommand{\bea}{\begin{eqnarray}}
\newcommand{\eea}{\end{eqnarray}}

\newcommand{\W}{W_{q,p,\alpha}}

\newcommand*{\Scale}[2][4]{\scalebox{#1}{$#2$}}


\begin{document}

\begin{frontmatter}



\title{On Fractional $q$-Sturm--Liouville Problems}


\author{Zeinab S.I. Mansour}

\address{Department of Mathematics, Faculty of Science, King Saud University, Riyadh}
 \ead{zsmansour@ksu.edu.sa, zeinabs98@hotmail.com}

\begin{abstract}
In this paper, we formulate a regular $q$-fractional Sturm--Liouville problem (qFSLP) which  includes  the left-sided Riemann--Liouville and  the right-sided Caputo $q$-fractional  derivatives  of the same  order $\alpha$, $\alpha\in (0,1)$.   The properties of the eigenvalues and the eigenfunctions  are investigated. A  $q$-fractional version of the Wronskian is defined and its relation to  the simplicity of the  eigenfunctions is verified.  We use the fixed point theorem to introduce a sufficient condition  on eigenvalues for the  existence and uniqueness of  the associated eigenfunctions  when $\alpha>1/2$. These results  are a generalization of  the  integer regular $q$-Sturm--Liouville problem introduced by Annaby and Mansour in\cite{AM1}. An example for a qFSLP whose eigenfunctions are little $q$-Jacobi polynomials is introduced.
\end{abstract}

\begin{keyword}
Left and right sided Riemann--Liouville and Caputo $q$-derivatives, eigenvalues and eigenfunctions, existence and uniqueness theorem, $q,\alpha$ Wronskian.


 \MSC      39A13\sep 26A33\sep 34L10.
\end{keyword}

\end{frontmatter}

\bigskip
\section{\bf{Introduction}}

In the joint paper of Sturm and Liouville~\cite{Sturm-Liouville},  they studied the problem
\be \label{SLE} -\frac{d}{dx}\left(p\frac{dy}{dx}\right)+r(x)y(x)=\lambda w y(x),\quad x\in [a,b], \ee
with certain boundary conditions at $a$ and $b$. Here, the functions $p$, $w$ are positive on $[a,b]$ and $r$ is a real valued function on $[a,b]$. They proved the existence
of non-zero solutions (eigenfunctions) only for special values of the parameter $\lambda$ which is called eigenvalues.
For a comprehensive study for the contribution of Sturm and Liouville to the theory, see~\cite{STL}. Recently, many mathematicians were interested in a fractional version of
\eqref{SLE}, i.e. when the derivative is replaced by a fractional derivative like  Riemann--Liouville derivative or Caputo derivative, see~\cite{KOM,Klimek-Agrawal,FSL1,FSL2,FSL3,FSL4}.
In~\cite{AM1} Annaby and Mansour introduced a $q$-version of \eqref{SLE}, i.e., when the derivative is replaced by Jackson $q$-derivative. Their results are applied and developed in different aspects, for example,
see~\cite{Lavagno,Abreu005,AnBuIs,AMI,Nemri-Fitouhi,Abreu007}. In this paper, we introduce a $q$-fractional Sturm--Liouville problem. The operator $ -\frac{d}{dx}\left(p\frac{dy}{dx}\right)$ in~\eqref{SLE} is self-adjoint operator because the adjoint of the operator $\frac{d}{dx}$  is $-\frac{d}{dx}$ in a certain subspace of $L^2(0,a)$. So, the negative sign in~\eqref{SLE} exists on purpose.  Therefore, the  fractional $q$-Sturm--Liouville problem  under consideration contains both  of the  left-sided Caputo $q$-fractional derivative  and
the right-sided Riemann--Liouville $q$-fractional derivatives  because they are adjoint operators in a certain Hilbert space.

This paper is organized as follows. Section 2 is on the $q$-fractional operators and their properties which we need in the sequel. In Section 3, we formulate the fractional $q$-Sturm--Liouville problem, we show that
the eigenvalues are real and the eigenfunctions associated to different eigenvalues are orthogonal in a certain Hilbert space. We use the fixed point theory to give a sufficient  condition on
  the parameter $\lambda$ to guarantee the existence  and uniqueness of the solution. We also impose a condition on the domain of the problem in order to prove the existence of  solution
for any $\lambda$.  The Wronskian associated with the problem is defined and a relation between its value   at zero and the simplicity of eigenfunctions is proved in Section 4. Finally, in Section 5, an example for a qFSLP whose eigenfunctions are little $q$-Jacobi polynomials is introduced.
Throughout this paper $q$ is a positive number less than 1. The set of non negative integers is denoted by $\mathbb{N}_0$, and the set of positive integers is denoted
by $\mathbb{N}$. For $t>0$,
\[A_{q,t}:=\set{tq^n\;:\;n\in\mathbb{N}_0},\;\;\;A_{q,t}^*:=A_{q,t}\cup\set{0},\]
and \[\mathcal{A}_{q,t}:=\set{\pm tq^n\;:\;n\in\mathbb{N}_0}.\]
When $t=1$, we simply use $A_{q}$, $A_q^*$, and $\mathcal{A}_q$ to denote $A_{q,1}$, $A_{q,1}^*$, and $\mathcal{A}_{q,1}$, respectively.    We follow~\cite{GR} for the definitions and notations of the $q$-shifted factorial, the $q$-gamma and $q$-beta functions, the basic hypergeometric series, and
Jackson $q$-difference operator and integrals. A set $A$ is called  a $q$-geometric set if $qx\in A$ whenever $x\in A$.
Let $X$ be a $q$-geometric set containing zero. A function $f$ defined on $X$ is called $q$-regular at zero if
\[\lim_{n\to\infty} f(xq^n)=f(0)\quad\mbox{for all}\;x\in X.\]
  Let $C(X)$  denote the space of all $q$-regular at zero functions defined  on
$X$ with values in $\mathbb{R}$. $C(X)$ associated with the norm  function
\[\norm{f} = \sup\set{|f(xq^n)|\; :\; x\in X,\; n\in\mathbb{N}_0},\]
is a normed space.
The $q$-integration by parts rule~\cite{AMbook} is
\be\label{qIR}\int_{a}^{b}f(x)D_qg(x)=f(x)g(x)|_{a}^{b}+\int_{a}^{b}D_qf(x)g(qx)\,d_qx,\; a,b\in X,\ee
and $f,\,g$ are $q$-regular at zero functions.

For $p>0$, and   $Y$ is $A_{q,t}$ or $A_{q,t}^*$, the space $L_q^p(Y)$ is the normed space
of all functions defined on $Y$ such that
\[\norm{f}_p:=\left(\int_{0}^{t}|f(u)|^p\,d_qu\right)^{1/p}<\infty. \]
If $p=2$, then $L_q^2(Y)$  associated with the inner product
\be\label{IP} \left<f,g\right>:=\int_{0}^{t}f(u)\overline{g(u)}\,d_qu\ee
is a Hilbert space. By a weighted $L_q^2(Y, w)$ space is the space of all functions  $f$ defined on $Y$
such that
\[\int_{0}^{t}|f(u)|^2 w(u)\,d_qu<\infty,\]
where $w$ is a positive function defined on $Y$. $L_q^2(Y, w)$ associated with the inner product
\[ \left<f,g\right>:=\int_{0}^{t}f(u)\overline{g(u)}w(u)\,d_qu\]
is a Hilbert space.
The space of all $q$-absolutely functions on $A_{q,t}^*$ is denoted by $\mathcal{A}C_q(A_{q,t}^*)$ and defined as the space of all $q$-regular
at zero functions $f$
satisfying \[\sum_{j=0}^{\infty}|f(uq^j)-f(uq^{j+1})|\leq K\;\mbox{for all}\; u\in A_{q,t}^{*}, \]
and $K$ is a constant depending on the function $f$, c.f.~\cite[Definition 4.3.1]{AMbook}. I.e.
\[\mathcal{A}C_q(A_{q,t}^*)\subseteq C_q(A_{q,t}^*).\]
The  space $\mathcal{A}C_q^{(n)}(A_{q,t}^*)$  ($n\in\mathbb{N}$) is the space
of all functions defined on $X$ such that  $f,\, D_qf,\,\ldots,\, D_q^{n-1}f$ are $q$-regular at zero and $D_q^{n-1}f\in \mathcal{A}C_q(A_{q,t}^*)$, c.f.~\cite[Definition 4.3.2]{AMbook}.
Also it is proved in~\cite[Theorem 4.6]{AMbook} that a function $f\in \mathcal{A}C_q^{(n)}(A_{q,t}^*)$ if and only if there exists a function $\phi\in L_q^1(A_{q,t}^*)$
such that
\[f(x)=\sum_{k=0}^{n-1}\frac{D_q^kf(0)}{\Gamma_q(k+1)}x^k+\frac{x^{n-1}}{\Gamma_q(n)}\int_{0}^{x} (qu/x;q)_{n-1}\phi(u)\,d_qu,\;x\in A_{q,t}^*.\]
In particular, $f\in \mathcal{A}C(A_{q,t}^*)$ if and only if $f$ is $q$-regular at zero such that $D_qf\in L_q^1(A_{q,t}^*)$.
It is worth noting that in~\cite{AMbook}, all the definitions and results we have just mentioned are defined and proved for functions defined on the interval $[0,a]$
instead of $A_{q,t}^*$.

\section{Fractional $q$-Calculus}

This section includes the definitions and properties of  the left sided and right sided Riemann--Liouville $q$-fractional operators which we need in our investigations.

The  left   sided  Riemann--Liouville $q$-fractional operator is defined by
\be\label{LS:OP}
I_{q,a^+}^{\alpha}f(x)=\dfrac{x^{\alpha-1}}{\Gamma_q(\alpha)}\int_{a}^{x}(qt/x;q)_{\alpha-1}f(t)\,d_qt.
\ee
 This  definition   is introduced by Agarwal in~\cite{Agr} when  $a=0$ and by Rajkovi\'{c} et.al~\cite{Rajkovic09} for $a\neq 0$.
 We define a right sided Riemann--Liouville $q$-fractional operator  by
\be\label{RS:OP}
I_{q,b^-}^{\alpha}f(x)=\frac{1}{\Gamma_q(\alpha)}\int_{qx}^{b}t^{\alpha-1}(qx/t;q)_{\alpha-1}f(t)\,d_qt.
\ee
One can prove that if $x=bq^m$, $m\in \mathbb{N}_0$, then
\[
\begin{split}
I_{q,b^-}^{\alpha}f(x)&=b^{\alpha}(1-q)^{\alpha}\sum_{j=0}^{m}q^{j\alpha}\frac{(q^{\alpha};q)_{m-j}}{(q;q)_{m-j}}f(bq^j)\\
&=b^{\alpha}(1-q)^{\alpha}\frac{(q^{\alpha};q)_m}{(q;q)_m}\sum_{j=0}^{m}q^{j}\frac{(q^{-m};q)_{j}}{(q^{1-m-\alpha};q)_{j}}f(bq^j),
\end{split}
\]
where we used~\cite[Eq. (I.11)]{GR}.
For example,
\[\begin{split}
I_{q,b^-}^{\alpha}b^{\mu}(\frac{qx}{b};q)_{\mu}&=b^{\alpha+\mu}(1-q)^{\alpha}\dfrac{(q^{\alpha};q)_{m}}{(q;q)_m}(q;q)_{\mu}{}_2\phi_1\left(q^{-m},q^{\mu+1};q^{1-m-\alpha};q,q\right)\\
&=\frac{\Gamma_{q}(\mu+1)}{\Gamma_q(\mu+\alpha+1)}b^{\alpha+\mu}(\frac{qx}{b};q)_{\mu+\alpha}.
\end{split}
\]
\[I_{q,0^+}^{0}f(x)=f(x)\qquad\mbox{and}\qquad I_{q,b^-}^{0}f(x)=f(x);\;x\in A_{q,b}.\]
The left sided Riemann--Liouville $q$-fractional operator satisfies the semigroup property
\[
I_{q,a^+}^\alpha I_{q,a^+}^{\beta}f(x)=I_{q,a^+}^{\alpha+\beta}f(x).
\]
The case $a=0$ is proved in~\cite{Agr} and   the case $a>0$ is proved  in~\cite{Rajkovic09}.

\begin{thm}
The right sided Riemann--Liouville $q$-fractional operator satisfies the semigroup property
\be\label{sgp2}
 I_{q,b^-}^\alpha I_{q,{{b}}^-}^{\beta}f(x)=I_{q,b^-}^{\alpha+\beta}f(x),\;x\in A_{q,b}^*,
\ee
for any function defined on $A_{q,b}$ and for any values of  $\alpha$ and $\beta$.
\end{thm}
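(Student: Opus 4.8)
The plan is to convert the operator identity, which a priori concerns $q$-integrals, into a purely algebraic statement about finite sums by using the explicit evaluation of $I_{q,b^-}^{\alpha}$ on the lattice $A_{q,b}$ that is already recorded in the excerpt. Fix $x=bq^m$ with $m\in\mathbb{N}_0$. First I would write
\[
I_{q,b^-}^{\beta}f(bq^m)=b^{\beta}(1-q)^{\beta}\sum_{j=0}^{m}q^{j\beta}\frac{(q^{\beta};q)_{m-j}}{(q;q)_{m-j}}f(bq^j),
\]
and then apply the same formula with exponent $\alpha$ to $g=I_{q,b^-}^{\beta}f$. This produces a finite double sum; since everything is finite, the order of summation may be interchanged with no convergence hypothesis on $f$, $\alpha$ or $\beta$, which is exactly why the theorem holds ``for any function'' and ``for any values of $\alpha$ and $\beta$.''

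After interchanging the summations and collecting the factor multiplying $f(bq^j)$, the inner sum is (after the substitutions $n=m-j$, $l=k-j$) of the form $q^{j\alpha}$ times
\[
\sum_{l=0}^{n}q^{l\alpha}\frac{(q^{\beta};q)_{l}}{(q;q)_{l}}\frac{(q^{\alpha};q)_{n-l}}{(q;q)_{n-l}}.
\]
Thus the whole claim reduces to the single $q$-Vandermonde-type convolution
\[
\sum_{l=0}^{n}q^{l\alpha}\frac{(q^{\beta};q)_{l}}{(q;q)_{l}}\frac{(q^{\alpha};q)_{n-l}}{(q;q)_{n-l}}=\frac{(q^{\alpha+\beta};q)_{n}}{(q;q)_{n}},\qquad n\in\mathbb{N}_0.
\]
This identity is the heart of the argument and the step I expect to demand the most care; the rest is bookkeeping.

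To prove it I would use the generating-function (Cauchy product) method. By the $q$-binomial theorem $\sum_{k\ge 0}\frac{(a;q)_k}{(q;q)_k}z^k=\frac{(az;q)_\infty}{(z;q)_\infty}$ one has
\[
\frac{(q^{\alpha+\beta}z;q)_\infty}{(q^{\alpha}z;q)_\infty}=\sum_{l\ge 0}\frac{(q^{\beta};q)_l}{(q;q)_l}(q^{\alpha}z)^l,\qquad \frac{(q^{\alpha}z;q)_\infty}{(z;q)_\infty}=\sum_{k\ge 0}\frac{(q^{\alpha};q)_k}{(q;q)_k}z^k.
\]
Multiplying these two series and using the telescoping product $\frac{(q^{\alpha+\beta}z;q)_\infty}{(q^{\alpha}z;q)_\infty}\cdot\frac{(q^{\alpha}z;q)_\infty}{(z;q)_\infty}=\frac{(q^{\alpha+\beta}z;q)_\infty}{(z;q)_\infty}=\sum_{n\ge 0}\frac{(q^{\alpha+\beta};q)_n}{(q;q)_n}z^n$, the coefficient of $z^n$ is precisely the claimed convolution. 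Alternatively one may simply invoke the terminating $q$-Chu--Vandermonde sum from~\cite{GR}.

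Finally I would substitute this evaluation back into the rearranged sum, whereupon the factors $q^{j\alpha}q^{j\beta}$ combine into $q^{j(\alpha+\beta)}$ and the expression becomes
\[
b^{\alpha+\beta}(1-q)^{\alpha+\beta}\sum_{j=0}^{m}q^{j(\alpha+\beta)}\frac{(q^{\alpha+\beta};q)_{m-j}}{(q;q)_{m-j}}f(bq^j)=I_{q,b^-}^{\alpha+\beta}f(bq^m),
\]
establishing~\eqref{sgp2} at every $x=bq^m\in A_{q,b}$. The single remaining point $x=0\in A_{q,b}^{*}$ is then handled by letting $m\to\infty$ and invoking $q$-regularity at zero (or by direct evaluation at $x=0$). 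In short, the only genuine content is the convolution identity above; all other steps are rearrangements of finite sums.
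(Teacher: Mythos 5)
Your proof is correct, but it follows a genuinely different route from the paper's. The paper stays at the level of $q$-integrals: it expands $I_{q,b^-}^{\alpha}I_{q,b^-}^{\beta}f(x)$ as a double $q$-integral, interchanges the order of $q$-integration, and then collapses the inner integral with the $q$-beta evaluation \eqref{Eq:8}, which it states without proof. You instead discretize from the outset, using the explicit lattice formula for $I_{q,b^-}^{\alpha}f(bq^m)$ recorded right after \eqref{RS:OP}; the interchange is then one of two \emph{finite} sums, and the whole identity reduces to the terminating $q$-Chu--Vandermonde convolution, which you actually prove via the $q$-binomial theorem. The two arguments are the same computation in different clothing --- written out on the lattice, \eqref{Eq:8} with $x,u\in A_{q,b}$ is exactly your convolution identity up to normalizing factors --- but your version is more self-contained (the key lemma is proved rather than quoted) and it makes the clause ``for any function and any values of $\alpha$ and $\beta$'' transparent, since for $x\in A_{q,b}$ no convergence question ever arises. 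What the paper's formulation buys is brevity and uniform notation that nominally covers $x=0$ as well. Be aware, though, that at the single point $x=0$ both treatments are equally incomplete: there the sums and $q$-integrals become genuinely infinite, so the interchange (paper) or the limit $m\to\infty$ (you) requires a convergence hypothesis on $f$ that the literal statement ``for any function'' does not provide; your parenthetical appeal to $q$-regularity at zero does not repair this, since $f$ is only assumed defined on $A_{q,b}$ and need not make either side convergent at $0$. This caveat is a defect of the theorem's wording shared by the paper, not a gap peculiar to your argument for $x\in A_{q,b}$.
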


\begin{proof}
Let $x\in A_{q,b}^*$. Then
\be\begin{gathered}\label{Eq:7}I_{q,b^-}^{\alpha}I_{q,b^-}^{\beta}f(x)\\=\dfrac{1}{\Gamma_q(\alpha)\Gamma_q(\beta)}\int_{qx}^{b}t^{\alpha-1}(qx/t;q)_{\alpha-1}\int_{qt}^{b}u^{\beta-1}(qt/u;q)_{\beta-1}f(u)\,d_qu\,d_qt\\
=\dfrac{1}{\Gamma_q(\alpha)\Gamma_q(\beta)}\int_{qx}^{b}u^{\beta-1}f(u)\int_{t=qx}^{u}t^{\alpha-1}(qx/t;q)_{\alpha-1}(qt/u;q)_{\beta-1}f(u)\,d_qt\,d_q\,u.
\end{gathered}
\ee
But
\be\label{Eq:8}\int_{qx}^{u}t^{\alpha-1}(qx/t;q)_{\alpha-1}(qt/u;q)_{\beta-1}\,d_qt=u^{\alpha}\beta_q(\alpha,\beta)(qx/u;q)_{\alpha+\beta-1},\ee
then substituting from \eqref{Eq:8} into \eqref{Eq:7} yields~\eqref{sgp2}.
\end{proof}

\begin{defn}
Let $\alpha>0$ and  $\ulcorner\alpha\urcorner=m$.  The left and right side  Riemann--Liouville fractional $q$-derivatives of order $\alpha$ are defined by
\[
D_{q,a^+}^\alpha f(x):=D_q^mI_{q,a^+}^{m-\alpha}f(x),\; D_{q,b^-}^\alpha f(x):=\left(\frac{-1}{q}\right)^mD_{q^{-1}}^mI_{q,b^-}^{m-\alpha}f(x),
\]
the left and right sided Caputo fractional $q$-derivatives of order $\alpha$ are defined by
\[
\CP^{\alpha}f(x):=I_{q,a^+}^{m-\alpha}D_q^mf(x),\quad{}^cD_{q,b^-}^\alpha:=\left(\frac{-1}{q}\right)^mI_{q,b^-}^{m-\alpha}D_{q^{-1}}^mf(x).
\]
\end{defn}
From now on, we shall consider left sided Riemann--Liouville and Caputo fractional $q$-derivatives when the lower point $a=0$ and right sided Riemann--Liouville and Caputo
fractional $q$-derivatives when $b=a$.
According to~\cite[pp. 124,\,148]{AMbook}, $D_{q,0^+}^{\alpha}f(x)$ exists if
\[f\in L_q^1(A_{q,a}^*)\;\;\mbox{such that}\;\;I_{q,0^+}^{m-\alpha}f\in \mathcal{A}C_q^{(m)}(A_{q,a}^*),\]
and $\CP^{\alpha}f$ exists if
\[f \in \mathcal{A}C_q^{(m)}(A_{q,a}^*).\]

\begin{lem}

\begin{enumerate}Let $\alpha\in(0,1)$.

\item[(i)]If $f\in L_q^1(A_{q,a}^*)$  such that $I_{q,0^+}^{\alpha}f\in \mathcal{A}C_q(A_{q,a}^*)$ then

\be\label{CR3}\CZ^{\alpha}I_{q,0^+}^{\alpha}f(x)=f(x)-\dfrac{I_{q,0^+}^{\alpha}f(0)}{\Gamma_q(1-\alpha)}x^{-\alpha}.
\ee
Moreover, if $f$ is bounded on $A_{q,a}^*$ then
\be\label{CR3-}\CZ^{\alpha}I_{q,0^+}^{\alpha}f(x)=f(x).\ee
\item[(ii)] For any function $f$ defined on $A_{q,a}^*$
\be\label{CR3-1}
 \CM^{\alpha}I_{q,a^-}^{\alpha}f(x)=f(x)-\frac{a^{-\alpha}}{\Gamma_q(1-\alpha)}(qx/a;q)_{-\alpha}\left(I_{q,a^-}^{\alpha}f\right)(\frac{a}{q}).\ee

\item[(iii)] If $f\in L_q^1(A_{q,a})$ then
\be\label{CR1}
D_{q,0^+}^{\alpha}I_{q,0^+}^{\alpha}f(x)=f(x).
\ee

\item[(iv)] For any function $f$ defined on $A_{q,a}^*$
\be\label{CR2}
D_{q,a^-}^{\alpha}I_{q,a^-}^{\alpha}f(x)=f(x).
\ee
\item[(v)] If $f\in\mathcal{A}C_q(A_{q,a}^*))$  then
\be\label{CR4}
I_{q,0^+}^{\alpha}\CZ^{\alpha}f(x)=f(x)-f(0).
\ee
\item[(vi)] If $f$ is a function defined on $A_{q,a}^*$ then
\be\label{CR5}
I_{q,a^-}^{\alpha}D_{q,a^-}^{\alpha}f(x)=f(x)-\frac{a^{\alpha-1}}{\Gamma_q(\alpha)}(qx/a;q)_{\alpha-1}\left(I_{q,a^-}^{1-\alpha}f\right)(\frac{a}{q}).
\ee
\item[(v)] If $f$ is defined on $[0,a]$ such that $D_qf$ is continuous on $[0,a]$ then
\be\label{CR6}
\CZ^{\alpha}f(x)=D_{q,0^+}^{\alpha}\left[f(x)-f(0)\right].
\ee
\end{enumerate}
\end{lem}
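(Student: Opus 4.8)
The plan is to obtain~\eqref{CR6} directly from parts (iii) and (v) of the present lemma, that is, from the composition identities~\eqref{CR1} and~\eqref{CR4}, rather than by a direct kernel computation. Since $\alpha\in(0,1)$ we have $\ulcorner\alpha\urcorner=1$, so the two operators in play unfold as $\CZ^{\alpha}f=I_{q,0^+}^{1-\alpha}D_qf$ and $D_{q,0^+}^{\alpha}g=D_qI_{q,0^+}^{1-\alpha}g$.

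First I would verify that the standing hypothesis, namely that $D_qf$ is continuous on $[0,a]$, places $f$ in the class for which~\eqref{CR4} holds. Continuity on the compact set $[0,a]$ forces $D_qf$ to be bounded, hence $D_qf\in L_q^1(A_{q,a}^*)$; combined with the $q$-regularity of $f$ at zero, this is precisely the characterization recorded at the end of Section~1 that $f\in\mathcal{A}C_q(A_{q,a}^*)$. Therefore~\eqref{CR4} applies and gives $I_{q,0^+}^{\alpha}\CZ^{\alpha}f(x)=f(x)-f(0)$.

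Next I would apply the operator $D_{q,0^+}^{\alpha}$ to both sides of this identity. The right-hand side becomes $D_{q,0^+}^{\alpha}\left[f(x)-f(0)\right]$, which is exactly the right-hand side of~\eqref{CR6}. The left-hand side becomes $D_{q,0^+}^{\alpha}I_{q,0^+}^{\alpha}\CZ^{\alpha}f(x)$, which I would collapse by invoking~\eqref{CR1} with $g=\CZ^{\alpha}f$; since~\eqref{CR1} asserts $D_{q,0^+}^{\alpha}I_{q,0^+}^{\alpha}g=g$ for $g\in L_q^1(A_{q,a})$, the left-hand side reduces to $\CZ^{\alpha}f(x)$, and~\eqref{CR6} follows.

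The one step requiring genuine care, and the main obstacle, is checking that $\CZ^{\alpha}f\in L_q^1(A_{q,a})$ so that~\eqref{CR1} may legitimately be applied to $g=\CZ^{\alpha}f$. Here I would write $\CZ^{\alpha}f=I_{q,0^+}^{1-\alpha}(D_qf)$ and use the boundedness of $D_qf$ together with the mapping property of the left-sided fractional $q$-integral, namely that $I_{q,0^+}^{1-\alpha}$ sends $L_q^1(A_{q,a})$ into itself; this last fact follows from the explicit kernel in~\eqref{LS:OP} and an interchange of the order of the Jackson integrations. Once this integrability is in hand the two-line reduction above is rigorous and the proof is complete.
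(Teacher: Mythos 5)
Your proposal addresses only the final identity \eqref{CR6}; the statement, however, is the entire seven-part lemma, and parts (i)--(vi) --- that is, \eqref{CR3}, \eqref{CR3-}, \eqref{CR3-1}, \eqref{CR1}, \eqref{CR2}, \eqref{CR4}, and \eqref{CR5} --- receive no proof at all. This is the genuine gap: two of the unproven parts, \eqref{CR1} and \eqref{CR4}, are precisely the inputs to your argument. Using them is not circular, since they can be (and in the paper are) established independently of \eqref{CR6}, but a proof of the lemma must actually establish them. For comparison, the paper proves \eqref{CR3}, \eqref{CR1}, \eqref{CR4}, and \eqref{CR6} by citing the earlier results in~\cite{AMbook} (Eqs.~(5.7), (4.66), (5.6), (5.8) there, with a noted misprint in (5.7)), and proves \eqref{CR3-1}, \eqref{CR2}, \eqref{CR5} by direct computation using the $q$-integration by parts rule \eqref{qIR}, the identity $D_{q,t}t^{\beta}(qx/t;q)_{\beta}=-[\beta]t^{\beta-1}(qx/t;q)_{\beta-1}$, and the semigroup property \eqref{sgp2}. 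A complete blind proof would need arguments of this kind (or explicit citations) for those six parts.

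For the part you did treat, your derivation is correct and is a genuinely different route from the paper's, which disposes of \eqref{CR6} by citation rather than argument. Applying $D_{q,0^+}^{\alpha}$ to both sides of \eqref{CR4} and collapsing the left side via \eqref{CR1} is a clean reduction, and your verification of hypotheses is essentially right: continuity of $D_qf$ on $[0,a]$ gives boundedness, hence $D_qf\in L_q^1(A_{q,a}^*)$, so $f\in\mathcal{A}C_q(A_{q,a}^*)$ by the characterization at the end of Section~1, and $\CZ^{\alpha}f=I_{q,0^+}^{1-\alpha}D_qf\in L_q^1(A_{q,a})$ by \eqref{ineq3} (indeed it is even bounded, by \eqref{ineq1}), so \eqref{CR1} applies to $g=\CZ^{\alpha}f$. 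The one point to state more carefully is the $q$-regularity of $f$ at zero: you invoke it as given, whereas it should be derived from the hypothesis (it follows from the existence and continuity of $D_qf$ at $0$, which forces $f(xq^n)\to f(0)$). What your approach buys is independence from the external reference for this part; what the paper's approach buys is brevity and uniformity, since the same source covers \eqref{CR3}, \eqref{CR1}, and \eqref{CR4}, which you would still need.
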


\begin{proof}
The proof of \eqref{CR3} is a special case of \cite[Eq.~(5.7)]{AMbook} but note that there is a misprint in the formula (5.7), the summation should start from $i=1$.
If $f$ is bounded on $A_{q,a}$, then $I_{q,0^+}^{1-\alpha}f(0)=0$, and \eqref{CR3-} follows at once from \eqref{CR3}.
Now we prove \eqref{CR3-1}.
\[\CM^{\alpha}I_{q,a^-}^{\alpha}f(x)=\frac{1}{\Gamma_q(1-\alpha)}\int_{qx}^{a}t^{-\alpha}(qx/t;q)_{-\alpha}D_q(I_{q,a^-}^{\alpha}f)(\frac{t}{q})\,d_qt,\]
where we used $-\frac{1}{q}D_{q^{-1}}f(x)=D_{q,x}f(\frac{x}{q})$. Then  applying the $q$-integration by parts formula~\eqref{qIR} and using
\[D_{q,t}t^{\beta}(qx/t;q)_{\beta}=-[\beta]t^{\beta-1}(qx/t;q)_{\beta-1},\;\beta\in\mathbb{R},\;[\beta]:=\frac{1-q^{\beta}}{1-q},\]
we obtain
\[\CM^{\alpha}I_{q,a^-}^{\alpha}f(x)=\frac{a^{-\alpha}}{\Gamma_q(1-\alpha)}(qx/a;q)_{-\alpha}\left(I_{q,a^-}^{1-\alpha}f\right)(\frac{a}{q})-I_{q,a^-}^{-\alpha}I_{q,a^-}^{\alpha}f(x).\]
Hence, the result follows from the semigroup property~\eqref{sgp2}.
\eqref{CR1} is proved in~\cite[Eq.~(4.66)]{AMbook}. The proof of \eqref{CR2} follows from the fact that
\[D_{q,a^-}^{\alpha}I_{q,a^-}^{\alpha}f(x)=-\frac{1}{q}I_{q,a^-}^{1-\alpha}I_{q,a^-}^{\alpha}f(x)=-\frac{1}{q}D_{q^{-1}}I_{q,a^-}f(x)=f(x),\]
where we used the semigroup property~\eqref{sgp2}.
The proof of \eqref{CR4} is a special case of~\cite[Eq.~(5.6)]{AMbook}. The proof of \eqref{CR5} is similar to the proof of~\eqref{CR2}and is omitted. Finally,
the proof of \eqref{CR6} is a special case of \cite[Eq.~(5.8)]{AMbook}.
\end{proof}
Set $X=A_{q,a}$ or $A_{q,a}^*$. Then
\[C(X)\subseteq L_q^2(X)\subseteq L_q^1(X).\]
Moreover, if $f\in C(X)$ then
\[\norm{f}_1\leq \sqrt{a}\norm{f}_2\leq a \norm{f}.\]
We have also the following inequalities:
\begin{enumerate}
\item  If $f\in C(A_{q,a}^*)$ then $I_{q,0^+}^{\alpha}f\in C(A_{q,a}^*)$ and
 \be\label{ineq1}   \norm{I_{q,0^+}^{\alpha}f}\leq \frac{a^{\alpha}}{\Gamma_q(\alpha+1)}\norm{f}.\ee
\item If $f\in L_q^1(X)$ then $I_{q,0^+}^{\alpha}f\in L_q^1(X)$ and
\be\label{ineq3}\norm{I_{q,0^+}^{\alpha}f}_1\leq M_{\alpha,1}\norm{f}_1,\qquad M_{\alpha,1}:=\dfrac{a^{\alpha}(1-q)^{\alpha}}{(1-q^{\alpha})(q;q)_{\infty}}.
\ee
\item If $f\in L_q^2(X)$ then $I_{q,0^+}^{\alpha}f\in L_q^2(X)$ and
\be\label{ineq2}
\norm{I_{q,0^+}^{\alpha}f}_2\leq\,M_{\alpha,2}\norm{f}_2,\ee
\[
M_{\alpha,2}:=\frac{a^{\alpha}}{\Gamma_q(\alpha)}\sqrt{\frac{(1-q)}{(1-q^{2\alpha})}}\left(\int_{0}^{1}(q\xi;q)^2_{\alpha-1}\,d_q\xi\right)^{1/2}.\]

\item If $\alpha>\frac{1}{2}$ and  $f\in L_q^2(X)$ then $I_{q,0^+}^{\alpha}f\in C(X)$ and
\be\label{ineq0}
\norm{I_{q,0^+}^{\alpha}f}\leq \widetilde{M}_{\alpha}\norm{f},\;\widetilde{M}_{\alpha}:= \frac{a^{\alpha-\frac{1}{2}}}{\Gamma_q(\alpha)}\left(\int_{0}^{1}(q\xi;q)^2_{\alpha-1}\,d_q\xi\right)^{1/2}.
\ee
\item Since $\norm{f}_2\leq \sqrt{a}\norm{f}$,  we conclude that if $f\in C(X)$ then $I_{q,0^+}^{\alpha}f\in L_q^2(X)$ and
\be\label{ineq3-2}
\norm{I_{q,0^+}^{\alpha}f}_2\leq K_{\alpha}\norm{f},\quad K_{\alpha}:=\sqrt{a}M_{\alpha,2}.
\ee
\item If $f\in C(A_{q,a}^*)$ then $I_{q,a^-}^{\alpha}f\in C(A_{q,a}^*)$ and
\[ \norm{I_{q,a^-}^{\alpha}f}\leq c_{\alpha,0} \norm{f},\quad c_{\alpha,0}:=\frac{a^{\alpha}(1-q)^{\alpha}}{(1-q^{\alpha})(q;q)_{\infty}}.\]
\item If $f\in L_q^1(X)$ then $I_{q,a^-}^{\alpha}f\in L_q^1(X)$ and
\[\norm{I_{q,a^-}^{\alpha}f}_1\leq \left\{\begin{array}{cc}\dfrac{(1-q)^{\alpha}a^{\alpha}}{(1-q^{\alpha})(q;q)_{\infty}}\norm{f}_1,& \mbox{if}\,\alpha<1,\\&\\
\dfrac{(1-q)^{\alpha-1}a^{\alpha-1}}{(q;q)_{\infty}}\norm{f}_1,& \mbox{if}\,\alpha \geq 1.\end{array}\right.\]
\item If $\alpha \neq \frac{1}{2}$  and $f\in L_q^2(X)$ then $I_{q,a^-}^{\alpha}f\in L_q^1(X)$ and
\[\norm{I_{q,a^-}^{\alpha}f}_2\leq \left\{\begin{array}{cc}\dfrac{(1-q)^{\alpha-\frac{1}{2}}a^{\alpha}}{\sqrt{1-q^{2\alpha-1}}(q;q)_{\infty}}\norm{f}_2,& \mbox{if}\,\alpha<\frac{1}{2},\\&\\
\dfrac{(1-q)^{\alpha}a^{\alpha}}{(q;q)_{\infty}\sqrt{(1-q^{2\alpha-1})(1-q^{2\alpha})}}\norm{f}_2,& \mbox{if}\,\alpha > \frac{1}{2}.\end{array}\right.\]
\end{enumerate}

\begin{lem}\label{Lem:1}Let $\alpha>0$. If
\begin{enumerate}
\item[(a)]   $f\in L_q^1(X)$ and $g$ is a bounded function on $A_{q,a}$,

or
\item[(b)] $\alpha\neq \frac{1}{2}$ and $f,\,g$ are $L_q^2(X)$ functions
\end{enumerate}
 then
\be\label{I2}
\int_{0}^{a}g(x)I_{q,0^{+}}^{\alpha}f(x)\,d_qx
=\int_{0}^{a}f(x)I_{q,{a^-}}^{\alpha}g(x)\,d_qx.\ee

\end{lem}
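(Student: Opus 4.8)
My plan is to unfold the definition \eqref{LS:OP} of the left-sided operator and then recognize the identity \eqref{I2} as an interchange in the order of the iterated $q$-integration, precisely the manoeuvre already used to pass from the first to the second line of \eqref{Eq:7} in the proof of the semigroup property. Writing the left-hand side as
\be\non
\int_{0}^{a}g(x)I_{q,0^+}^{\alpha}f(x)\,d_qx=\frac{1}{\Gamma_q(\alpha)}\int_{0}^{a}\int_{0}^{x}g(x)\,x^{\alpha-1}(qt/x;q)_{\alpha-1}f(t)\,d_qt\,d_qx,
\ee
I would first establish the Fubini-type rule
\be\non
\int_{0}^{a}\int_{0}^{x}F(x,t)\,d_qt\,d_qx=\int_{0}^{a}\int_{qt}^{a}F(x,t)\,d_qx\,d_qt,
\ee
which follows at once by expanding each $q$-integral as its defining series $\int_{0}^{c}h\,d_qu=(1-q)c\sum_{n\ge 0}q^n h(cq^n)$ and reindexing the resulting double sum over the index set $\set{(m,n):0\le m\le n}$.

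Applying this rule with $F(x,t)=g(x)\,x^{\alpha-1}(qt/x;q)_{\alpha-1}f(t)$ and pulling $f(t)$ out of the inner integral, the inner integral becomes, after renaming the dummy variable in \eqref{RS:OP},
\be\non
\frac{1}{\Gamma_q(\alpha)}\int_{qt}^{a}x^{\alpha-1}(qt/x;q)_{\alpha-1}g(x)\,d_qx=I_{q,a^-}^{\alpha}g(t),
\ee
so that the left-hand side of \eqref{I2} collapses to $\int_{0}^{a}f(t)I_{q,a^-}^{\alpha}g(t)\,d_qt$, which is exactly the right-hand side. The algebraic content is therefore immediate; the substance of the lemma lies entirely in justifying the interchange.

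The hard part is precisely this justification: the double series must converge absolutely for the reindexing to be valid, and this is where hypotheses (a) and (b) enter. Since $t=xq^k$ forces $qt/x=q^{k+1}\in(0,1)$, both $(qt/x;q)_{\infty}$ and $(q^{k+\alpha};q)_{\infty}$ are positive, whence the kernel $(qt/x;q)_{\alpha-1}=(qt/x;q)_{\infty}/(q^{k+\alpha};q)_{\infty}$ is positive on the lattice; consequently the sum of absolute values equals $\Gamma_q(\alpha)\int_{0}^{a}\abs{g(x)}(I_{q,0^+}^{\alpha}\abs{f})(x)\,d_qx$. Under (a) this is bounded by $\norm{g}\,\Gamma_q(\alpha)M_{\alpha,1}\norm{f}_1$ using \eqref{ineq3}, while under (b) the Cauchy--Schwarz inequality together with \eqref{ineq2} bounds it by $\Gamma_q(\alpha)M_{\alpha,2}\norm{g}_2\norm{f}_2$; in both cases it is finite, so Fubini applies. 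Finally, the restriction $\alpha\neq\frac12$ in case (b) is needed not for the left-hand integral but to guarantee, via the $L_q^2$-boundedness of $I_{q,a^-}^{\alpha}$ stated above for $\alpha\neq\frac12$, that $I_{q,a^-}^{\alpha}g\in L_q^2(X)$ and hence that the right-hand side of \eqref{I2} is itself well defined.
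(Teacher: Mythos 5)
Your proof is correct and follows essentially the same route as the paper: unfold the definition of $I_{q,0^+}^{\alpha}$, interchange the order of the double $q$-integration, and recognize the inner integral as $I_{q,a^-}^{\alpha}g$. The only difference is that you spell out the Fubini reindexing and the absolute-convergence estimates (via \eqref{ineq3}, Cauchy--Schwarz and \eqref{ineq2}, and the role of $\alpha\neq\tfrac12$) which the paper simply asserts follow ``from the conditions on the functions $f$ and $g$''.
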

\begin{proof}
The condition (a) or (b) of the present lemma assures the convergence of the $q$-integrals in \eqref{I2}.
Since
\[
\int_{0}^{a}g(x)I_{q,0+}^{\alpha}f(x)\,d_qx=\frac{1}{\Gamma_q(\alpha)}\int_{0}^{a}g(x) x^{\alpha-1}\int_{0}^{x}(qt/x;q)_{\alpha-1}f(t)\,d_qt\,d_qx,\]
from the conditions on the functions $f$ and $g$, the double $q$-integral is absolutely convergent, therefore we can interchange the order of the $q$-integrations to obtain
\[
\begin{gathered}
\int_{0}^{a}g(x)I_{a+}^{\alpha}f(x)\,d_qx=\int_{0}^{a}f(t)\frac{1}{\Gamma_q(\alpha)}\int_{qt}^{a}x^{\alpha-1}(qt/x;q)_{\alpha-1}g(x)\,d_qx \,d_qt\\
=\int_{0}^{a}f(t)I_{q,a^-}^{\alpha}g(t)\,d_qt.
\end{gathered}
\]
\end{proof}
\begin{lem} Let $\alpha\in(0,1)$.
\begin{itemize}
 \item[(a)] If $g\in L_q^1(A_{q,a}^*)$ such that $I_{q}^{1-\alpha}g\in\mathcal{A}C_q(A_{q,a}^*)$, and  $D_q^i f\in C(A_{q,a}^*)$ ($i=0,1$) then
\be\label{CP}
\int_{0}^{a}f(x)D_{q,0^+}^{\alpha}g(x)\,d_qx=-f(\frac{x}{q})I_{q,0^+}^{1-\alpha}g(x)\Big|_{x=0}^a+\int_{0}^{a}g(x) \CM^{\alpha}f(x)\,d_qx.
\ee
\item[(b)] If $f\in\mathcal{A}C_q(A_{q,a}^*)$, and $g$ is a bounded function on $A_{q,a}^*$ such that $D_{q,a^-}^{\alpha}g\in L_q^1(A_{q,a}^*)$ then
\be\label{CP3}
\int_{0}^{a}g(x){}^cD_{q,0^+}^{\alpha}f(x)\,d_qx=\left(I_{q,a^-}^{1-\alpha}g\right)(\frac{x}{q})f(x)\Big|_{x=0}^a+
\int_{0}^{a}f(x) D_{q,a^-}^\alpha g(x)\,d_qx.
\ee
\end{itemize}
\end{lem}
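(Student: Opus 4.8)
The plan is to read both identities as fractional versions of the ordinary $q$-integration by parts rule~\eqref{qIR}, built from three ingredients. First, the definitions of the fractional $q$-derivatives with $m=1$, so that $D_{q,0^+}^{\alpha}=D_qI_{q,0^+}^{1-\alpha}$, $\CZ^{\alpha}=I_{q,0^+}^{1-\alpha}D_q$, $\CM^{\alpha}=I_{q,a^-}^{1-\alpha}\big(-\tfrac1qD_{q^{-1}}\big)$ and $D_{q,a^-}^{\alpha}=\big(-\tfrac1qD_{q^{-1}}\big)I_{q,a^-}^{1-\alpha}$. Second, the adjoint relation~\eqref{I2} of Lemma~\ref{Lem:1}, which interchanges the left-sided $I_{q,0^+}^{1-\alpha}$ and the right-sided $I_{q,a^-}^{1-\alpha}$ under the $q$-integral. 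Third, the dilation identity $-\tfrac1qD_{q^{-1}}h(x)=D_{q,x}h(\tfrac{x}{q})$ already used in the proof of~\eqref{CR3-1}, which rewrites the backward derivative inside every right-sided operator as an ordinary Jackson derivative of the $q$-dilated function $h(\cdot/q)$. It is this dilation that will deposit the shifted evaluation point $x/q$ in the boundary terms.

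For part (a) I would start from the integral $\int_0^a g\,\CM^{\alpha}f\,d_qx$ that carries the right-sided operator. Writing $\CM^{\alpha}f=I_{q,a^-}^{1-\alpha}\big(-\tfrac1qD_{q^{-1}}f\big)$, I apply~\eqref{I2} to transfer the fractional integral off this factor and onto $g$, turning it into $I_{q,0^+}^{1-\alpha}g$; here $g\in L_q^1$ and, because $D_q^if\in C$, the factor $-\tfrac1qD_{q^{-1}}f$ is bounded, so case (a) of Lemma~\ref{Lem:1} applies. Next the dilation identity turns $-\tfrac1qD_{q^{-1}}f$ into $D_q\big[f(\cdot/q)\big]$, and one application of~\eqref{qIR} moves this Jackson derivative onto $I_{q,0^+}^{1-\alpha}g$, which by definition becomes $D_{q,0^+}^{\alpha}g$. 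Collecting the pieces produces $\int_0^a f\,D_{q,0^+}^{\alpha}g\,d_qx$ together with the boundary contribution $f(\tfrac{x}{q})I_{q,0^+}^{1-\alpha}g(x)\big|_0^a$, which is~\eqref{CP} after rearrangement.

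Part (b) runs the same machine in the mirror order. Here I would begin with $\int_0^a f\,D_{q,a^-}^{\alpha}g\,d_qx$, use the dilation identity to write $D_{q,a^-}^{\alpha}g=D_q\big[(I_{q,a^-}^{1-\alpha}g)(\cdot/q)\big]$, and apply~\eqref{qIR} to shift the Jackson derivative onto $f$; this produces the boundary term $f(x)(I_{q,a^-}^{1-\alpha}g)(\tfrac{x}{q})\big|_0^a$ together with $\int_0^a D_qf\cdot I_{q,a^-}^{1-\alpha}g\,d_qx$. Since $f\in\mathcal{A}C_q$ forces $D_qf\in L_q^1$ while $g$ is bounded, the hypotheses of case (a) of Lemma~\ref{Lem:1} hold exactly, so~\eqref{I2} converts the last integral into $\int_0^a g\,I_{q,0^+}^{1-\alpha}(D_qf)\,d_qx=\int_0^a g\,\CZ^{\alpha}f\,d_qx$, yielding~\eqref{CP3} after collecting the boundary term.

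The routine part is checking convergence and existence: the assumption $I_{q}^{1-\alpha}g\in\mathcal{A}C_q$ in (a) is exactly what makes $D_{q,0^+}^{\alpha}g=D_qI_{q,0^+}^{1-\alpha}g$ well defined, $D_{q,a^-}^{\alpha}g\in L_q^1$ in (b) plays the same role on the right, and the $q$-regularity of $f$ (resp.\ of $f$ and $D_qf$) makes the boundary values finite. The genuine obstacle is the bookkeeping of the $q$-dilations and signs: the ordinary rule~\eqref{qIR} introduces the shifted argument $h(qx)$ under the integral, whereas the right-sided operators are built from $D_{q^{-1}}$ rather than $D_q$, and it is the cancellation $D_q[h(\cdot/q)](qx)=h(x)$ together with the dilation identity that both removes the unwanted shift in the interior integral and leaves the evaluation point $x/q$ in the boundary term. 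Carrying these substitutions through with the correct signs — in particular verifying endpoint by endpoint that the boundary term carried by~\eqref{qIR} matches the displayed one — is the step that must be done with care.
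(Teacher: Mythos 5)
Your proposal is correct and follows essentially the same route as the paper: the paper's own proof is precisely the one-line observation that the stated hypotheses give convergence of the $q$-integrals and that both identities follow from Lemma~\ref{Lem:1} together with the $q$-integration by parts rule~\eqref{qIR}, which is exactly the argument you carry out in detail (including the dilation identity $-\tfrac1q D_{q^{-1}}h(x)=D_{q,x}h(\tfrac{x}{q})$ that the paper itself uses in proving \eqref{CR3-1}). Your elaboration of the sign and $q$-dilation bookkeeping fills in what the paper leaves implicit.
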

\begin{proof}
The conditions on the functions $f$ and $g$ guarantee the convergence of the $q$-integrals  in \eqref{CP} and \eqref{CP3},
and their proofs  follow from Lemma~\ref{Lem:1} and the $q$-integration by parts rule~\eqref{qIR}.

\end{proof}
\section{Regular Fractional $q$-Sturm Liouville problems}
\begin{defn}
Let $\alpha\in (0,1)$. With the notation
\[
\mathcal{L}_{q,\alpha}y:=D_{q,a^-}^{\alpha}p(x)\CZ^{\alpha}y(x) +r(x)y(x),
\]
consider the fractional $q$-Sturm--Liouville equation
\be\label{EVP1}
\mathcal{L}_{q,\alpha}y(x)-\lambda w_{\alpha}(x)y(x)=0, \quad x\in A_{q,a}^*,
\ee
where $p(x)\neq 0$ and $w_{\alpha}>0$ for all $x\in A_{q,a}^*$, $p,r,\,w_{\alpha}$ are real valued functions defined in $A_{q,a}^*$
and the associated  boundary conditions are
\be\label{BC1}
c_1 y(0)+c_2 \left[I_{q,a^-}^{1-\alpha}\,p\CZ^{\alpha}y\right](0)=0,
\ee
\be\label{BC2}
d_1 y(a)+d_2\left[ I_{q,a^-}^{1-\alpha}\,p\CZ^{\alpha}y\right](\frac{a}{q})=0,\ee
with $c_1^2+c_2^2\neq 0$ and $d_1^2+d_2^2\neq 0$.

\end{defn}
As in the classical case, the problem of finding the complex numbers $\lambda$'s such that the boundary value problem has a non-trivial solution will be called a regular $q$-fractional Sturm--Liouville problem( regular qFSLP).
Such a value $\lambda$, is called an eigenvalue and the corresponding non-trivial solution, the eigenfunction.

In the following, we assume that $0<\alpha<1$ and consider the  subspace of the vector space $L_q^2(A_{q,a}^*)$ of all $q$-regular at zero functions satisfying the boundary conditions
\eqref{BC1}--\eqref{BC2}.
Hence  $V\subseteq L_q^2(A_{q,a}^*)\cap C(A_{q,a}^*)$ and   $V $associated with the  inner product~\eqref{IP} is a Hilbert space.
\begin{thm}$\mathcal{L}_{q,\alpha}$ is a self-adjoint operator on the Hilbert space $V$.
\end{thm}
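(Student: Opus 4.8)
The plan is to show that $\langle \mathcal{L}_{q,\alpha}y,z\rangle = \langle y,\mathcal{L}_{q,\alpha}z\rangle$ for all $y,z\in V$, which suffices since $r$ and $w_\alpha$ are real-valued and the operator is clearly densely defined on the Hilbert space $V$. Because the multiplication by $r(x)$ contributes the self-adjoint term $\int_0^a r(x)y(x)\overline{z(x)}\,d_qx$ symmetrically to both sides, it cancels, and the entire problem reduces to proving that the principal part $D_{q,a^-}^{\alpha}\,p\,\CZ^{\alpha}$ is symmetric. First I would write out
\[
\langle D_{q,a^-}^{\alpha}p\,\CZ^{\alpha}y,\,z\rangle=\int_0^a z(x)\,D_{q,a^-}^{\alpha}\bigl(p\,\CZ^{\alpha}y\bigr)(x)\,d_qx,
\]
treating $\overline{z}=z$ under the tacit assumption (or by polarization) that it is enough to work with real-valued functions, so conjugation can be suppressed.

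The core of the argument is a double integration by parts using exactly the two adjointness formulas proved earlier. First I would apply part (a) of the last Lemma, namely~\eqref{CP3}, with the roles arranged so that the right-sided Riemann--Liouville derivative $D_{q,a^-}^{\alpha}$ acting on $g:=p\,\CZ^{\alpha}y$ gets transferred onto $z$ via the left-sided Caputo derivative $\CZ^{\alpha}$; this converts the inner product into a boundary term plus $\int_0^a \bigl(p\,\CZ^{\alpha}y\bigr)(x)\,\CZ^{\alpha}z(x)\,d_qx$. At this stage the integrand is manifestly symmetric in $y$ and $z$ (it is $p\cdot\CZ^{\alpha}y\cdot\CZ^{\alpha}z$), so running the same integration by parts formula~\eqref{CP} in the reverse direction on the $z$-side reproduces $\langle y,\,D_{q,a^-}^{\alpha}p\,\CZ^{\alpha}z\rangle$ together with a second boundary term. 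Thus
\[
\langle \mathcal{L}_{q,\alpha}y,z\rangle-\langle y,\mathcal{L}_{q,\alpha}z\rangle=\text{(boundary terms)},
\]
and everything hinges on showing these boundary contributions vanish.

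The main obstacle, and the step deserving the most care, is verifying that the boundary terms cancel using the boundary conditions~\eqref{BC1}--\eqref{BC2}. The accumulated boundary expression will be a bilinear combination of the values $y(0)$, $y(a)$, $\bigl(I_{q,a^-}^{1-\alpha}p\,\CZ^{\alpha}y\bigr)(0)$, $\bigl(I_{q,a^-}^{1-\alpha}p\,\CZ^{\alpha}y\bigr)(a/q)$ and the corresponding quantities for $z$, exactly the four quantities appearing in the boundary conditions. I would write this combined boundary form as a sum of two $2\times 2$ determinant-type expressions, one associated with the endpoint $0$ and one with $a$. At the endpoint $0$, condition~\eqref{BC1} forces the vector $\bigl(y(0),\,[I_{q,a^-}^{1-\alpha}p\,\CZ^{\alpha}y](0)\bigr)$ and its $z$-counterpart both to be orthogonal to $(c_1,c_2)$; since $c_1^2+c_2^2\neq 0$ these two vectors are linearly dependent, which makes the relevant determinant vanish. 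The identical argument at $x=a$ using~\eqref{BC2} and $d_1^2+d_2^2\neq 0$ kills the other boundary piece. The delicate bookkeeping here is matching the precise arguments ($x=0$ versus $x/q$ evaluated at the endpoints) produced by the two integration-by-parts formulas with the exact arguments appearing in~\eqref{BC1}--\eqref{BC2}; once these are aligned, the self-adjointness follows.
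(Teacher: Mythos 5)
Your proposal follows essentially the same route as the paper: the paper likewise establishes the adjointness relation $\left<D_{q,a^-}^{\alpha}f,g\right>=-g(x)\bigl(I_{q,a^-}^{1-\alpha}f\bigr)(\tfrac{x}{q})\big|_{0}^{a}+\left<f,\CZ^{\alpha}g\right>$ (its Eq.~\eqref{IR}, a restatement of~\eqref{CP3}), applies it in both directions to obtain Green's identity~\eqref{GI}, and then kills the boundary terms at $x=0$ and $x=a$ by exactly your determinant argument, viewing~\eqref{BC1}--\eqref{BC2} as homogeneous $2\times 2$ systems with the nontrivial solutions $(c_1,c_2)$ and $(d_1,d_2)$. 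Your proof is correct, including the careful matching of the evaluation points $0$ and $a/q$ with those in the boundary conditions.
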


\begin{proof}
One can prove that for any functions $f,g\in L_q^2(0,a)\cap C(A_{q,a}^*)$, we have
\be\label{IR}
\left<D_{q,a^-}^{\alpha}f,g\right>=-g(x)\left(I_{q,0^+}^{1-\alpha}f\right)(\frac{x}{q})\Big|_{0}^{a}+\left<f, \CZ^{\alpha}g\right>.
\ee
Therefore, for  $u,v\in V$
\[
\begin{gathered}
\left<\mathcal{L}_{q,\alpha}u,v\right>-\left<u,\mathcal{L}_{q,\alpha}v\right>\\
=\left[u(x)\left(I_{q,a^-}^{1-\alpha}\,p\CZ^{\alpha}u\right)(\frac{x}{q})
-v(x)\left(I_{q,a^-}^{1-\alpha}\,p\CZ^{\alpha}u\right)(\frac{x}{q})\right]\Big|_{x=0}^a+.
\end{gathered}
\]
 This yields the Green's identity
 \be\label{GI}\begin{gathered}
 \int_{0}^{a}\left(u(x)\mathcal{L}_{q,\alpha}v(x)-v(x)\mathcal{L}_{q,\alpha}u(x)\right)\,d_qx=\\\left[u(x)\left(I_{q,a^-}^{1-\alpha}p\CZ^{\alpha}u\right)(\frac{x}{q})
-v(x)\left(I_{q,a^-}^{1-\alpha}p\CZ^{\alpha}u\right)(\frac{x}{q})\right]\Bigg|_{x=0}^a. \end{gathered}\ee
If $u$ and $v$ are in the space $V$, then they  satisfy                                                                                                                                                                                                                                    the boundary condition \eqref{BC1} at $x=0$, then
\[\left(
    \begin{array}{cc}
      u(0) &\left(I_{q,a^-}^{1-\alpha} p\CZ^{\alpha}u\right)(0) \\
     v(0) & \left(I_{q,a^-}^{1-\alpha}p\CZ^{\alpha}v(\cdot)\right)(0) \\
    \end{array}
  \right)\left(
           \begin{array}{c}
             c_1 \\
             c_2 \\
           \end{array}
         \right)=\left(
                   \begin{array}{c}
                     0 \\
                     0 \\
                   \end{array}
                 \right).
\]
Since $c_1^2+c_2^2\neq 0$, we should have
\[  u(0)\left(I_{q,a^-}^{1-\alpha}\,p\CZ^{\alpha}v\right)(0) -v(0)  \left(I_{q,a^-}^{1-\alpha}\,p\CZ^{\alpha}v\right)(0)=0.\]
In the same way we see that if $u$ and $v$ satisfy the conditions \eqref{BC2} at $x = a$, then
\[  u(a)\left(I_{q,a^-}^{1-\alpha}\,p\CZ^{\alpha}v\right)(\frac{a}{q}) -v(a)\left(I_{q,a^-}^{1-\alpha}\, p \CZ^{\alpha}v\right)(\frac{a}{q\underline{\underline{\underline{}}}})=0.\]

Hence
\be\label{SAI}\left<\ML_{q,\alpha} u, v\right>=\left<u,\ML_{q,\alpha}v\right>\ee
and the theorem  follows.
\end{proof}
\begin{thm}
The eigenvalues of the regular  qFSLP \eqref{EVP1}--\eqref{BC2}
 are real.\end{thm}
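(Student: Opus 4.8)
The plan is to mimic the standard Sturm--Liouville argument, exploiting the self-adjointness of $\mathcal{L}_{q,\alpha}$ already established in the previous theorem. Suppose $\lambda$ is an eigenvalue with a nontrivial eigenfunction $y\in V$, so that $\mathcal{L}_{q,\alpha}y=\lambda w_{\alpha}y$ on $A_{q,a}^*$. Taking complex conjugates and using that $p$, $r$, and $w_{\alpha}$ are real-valued functions, I would observe that $\overline{y}$ satisfies $\mathcal{L}_{q,\alpha}\overline{y}=\overline{\lambda}\,w_{\alpha}\overline{y}$; here it is essential to note that the operator $\mathcal{L}_{q,\alpha}$ commutes with conjugation precisely because its coefficient functions are real. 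One must also check that $\overline{y}$ again lies in $V$, which follows since the boundary conditions \eqref{BC1}--\eqref{BC2} have real coefficients $c_1,c_2,d_1,d_2$ and the fractional $q$-operators map real functions to real functions.

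Next I would form the inner products and subtract. Pairing the eigenvalue equation for $y$ against $y$ in the weighted sense gives $\langle \mathcal{L}_{q,\alpha}y,\,y\rangle=\lambda\langle w_{\alpha}y,\,y\rangle$, while pairing the conjugated equation appropriately gives $\langle y,\,\mathcal{L}_{q,\alpha}y\rangle=\overline{\lambda}\langle w_{\alpha}y,\,y\rangle$. Invoking the self-adjointness relation \eqref{SAI}, namely $\langle \mathcal{L}_{q,\alpha}y,\,y\rangle=\langle y,\,\mathcal{L}_{q,\alpha}y\rangle$, subtraction yields
\[
(\lambda-\overline{\lambda})\int_{0}^{a}|y(x)|^2 w_{\alpha}(x)\,d_qx=0.
\]
Since $w_{\alpha}(x)>0$ for all $x\in A_{q,a}^*$ and $y$ is a nontrivial eigenfunction, the $q$-integral $\int_{0}^{a}|y(x)|^2 w_{\alpha}(x)\,d_qx$ is strictly positive, forcing $\lambda=\overline{\lambda}$, i.e. $\lambda\in\mathbb{R}$.

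The main obstacle I anticipate is bookkeeping at the level of the inner product rather than any deep analytic difficulty. Specifically, the self-adjointness theorem was proved for $u,v\in V$ with the plain inner product \eqref{IP}, whereas the eigenvalue equation carries the weight $w_{\alpha}$; I would need to be careful that the correct pairing is used so that \eqref{SAI} applies to the unweighted bracket $\langle \mathcal{L}_{q,\alpha}y,\,y\rangle$ while the eigenvalue appears against the weighted bracket $\langle w_{\alpha}y,\,y\rangle$. This is just a matter of writing $\langle \mathcal{L}_{q,\alpha}y,\,y\rangle=\langle \lambda w_{\alpha}y,\,y\rangle=\lambda\langle w_{\alpha}y,\,y\rangle$ and matching it with the conjugate computation. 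A secondary point to verify is that $\overline{y}\in V$ and that conjugation is compatible with the Riemann--Liouville and Caputo $q$-operators on real coefficients, but this reduces to the linearity of the $q$-integral and the reality of all data, so no genuine difficulty arises there.
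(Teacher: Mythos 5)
Your proof is correct and follows essentially the same route as the paper: the paper applies Green's identity \eqref{GI} to the pair $y$, $\overline{y}$ to get $(\overline{\lambda}-\lambda)\int_{0}^{a}w_{\alpha}|y|^2\,d_qx=0$, which is exactly your computation with \eqref{SAI} unwound one step, since \eqref{SAI} is derived from \eqref{GI}. Your extra care in checking that $\overline{y}\in V$ and that conjugation commutes with $\mathcal{L}_{q,\alpha}$ (by reality of $p$, $r$, $w_{\alpha}$ and of the boundary coefficients) is a point the paper leaves implicit, but it does not change the substance of the argument.
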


 \begin{proof}
 Assume that $\lambda$ is an eigenvalue associated with an eigenfunction $y$. Then we have
 \[\mathcal{L}_ {q,\alpha}y(x)=\lambda w_{\alpha}(x)y(x),\quad \mathcal{L}_{q,\alpha}\overline{y(x)}=\overline{\lambda} w_{\alpha}(x)y(x). \]
 Therefore from Green's identity \eqref{GI}
 \[
 \begin{gathered}
 (\overline{\lambda}-\lambda)\int_{0}^{a}w_{\alpha}(x)|y(x)|^2\,d_qx=\int_{0}^{a}\left(y(x)\ML_{q,\alpha}\overline{y(x)}-\overline{y(x)}\ML_{q,\alpha}y(x)\right)\,d_qx=0.
 \end{gathered}
 \]
 Since $y$ is a non trivial solution and $w_{\alpha}>0$, we obtain $\lambda=\overline{\lambda}$.
 \end{proof}

 \begin{lem}
 If $u$ and $v$ are eigenfunctions of the regular qFSLP \eqref{EVP1}--\eqref{BC2} associated with different eigenvalues $\lambda\,$  and $\mu$, then
 $u$ and $v$ are orthogonal on the weighted space $L_q^2(A_{q,a}^*,w_{\alpha})$.

 \end{lem}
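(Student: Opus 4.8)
The plan is to follow the classical orthogonality argument adapted to the $q$-fractional setting, using Green's identity~\eqref{GI} as the central tool. Suppose $u$ and $v$ are eigenfunctions associated with the distinct eigenvalues $\lambda$ and $\mu$ respectively, so that $\ML_{q,\alpha}u = \lambda w_{\alpha} u$ and $\ML_{q,\alpha}v = \mu w_{\alpha} v$ on $A_{q,a}^*$. The goal is to show $\langle u, v\rangle_{w_\alpha} = \int_0^a u(x)\overline{v(x)}w_\alpha(x)\,d_qx = 0$.

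First I would substitute the eigenvalue equations into the left-hand side of Green's identity. Multiplying the equation for $u$ by $v$ and the equation for $v$ by $u$ and subtracting, the term $r(x)u(x)v(x)$ cancels, and the integrand $u\ML_{q,\alpha}v - v\ML_{q,\alpha}u$ becomes $(\mu - \lambda)w_\alpha(x)u(x)v(x)$. Thus
\be
(\mu-\lambda)\int_0^a u(x)v(x)w_\alpha(x)\,d_qx = \left[u(x)\left(I_{q,a^-}^{1-\alpha}p\CZ^\alpha v\right)(\tfrac{x}{q}) - v(x)\left(I_{q,a^-}^{1-\alpha}p\CZ^\alpha u\right)(\tfrac{x}{q})\right]\Bigg|_{x=0}^a.
\ee
The next step is to show the right-hand boundary term vanishes. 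This is exactly the computation already performed in the proof that $\ML_{q,\alpha}$ is self-adjoint: since both $u$ and $v$ lie in $V$, they satisfy the boundary conditions~\eqref{BC1} at $x=0$ and~\eqref{BC2} at $x=a$, and the determinant argument (using $c_1^2+c_2^2\neq 0$ and $d_1^2+d_2^2\neq 0$) forces each boundary contribution to be zero. Hence the entire bracket evaluates to zero, leaving $(\mu-\lambda)\int_0^a u v\,w_\alpha\,d_qx = 0$.

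Finally, since the eigenvalues are distinct, $\mu - \lambda \neq 0$, so we may divide to conclude $\int_0^a u(x)v(x)w_\alpha(x)\,d_qx = 0$, which is precisely the orthogonality of $u$ and $v$ in $L_q^2(A_{q,a}^*, w_\alpha)$. I would note that because the eigenvalues are real by the preceding theorem, there is no distinction between $v$ and $\overline{v}$ in the weighted inner product, so the real-valued Green's identity~\eqref{GI} suffices and no separate complex-conjugate version is needed. The only genuine obstacle is the vanishing of the boundary term, but this is essentially inherited verbatim from the self-adjointness proof, so the argument is short; the remaining steps are purely algebraic.
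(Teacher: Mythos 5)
Your proof is correct and follows essentially the same route as the paper: the paper simply cites the already-established self-adjointness identity~\eqref{SAI} (substituting the eigenvalue equations and using reality of the eigenvalues to get $(\lambda-\mu)\int_0^a u\,v\,w_\alpha\,d_qx=0$), whereas you unroll that identity by re-running the Green's identity~\eqref{GI} argument and the boundary-term cancellation, which is exactly how \eqref{SAI} was proved. The mathematical content is identical, so your version is just a slightly longer presentation of the paper's proof.
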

 \begin{proof}
 Since $\mathcal{L}_{q,\alpha}$ is self adjoint, then substituting with $\mathcal{L}_{q,\alpha}u=\lambda\,w_{\alpha} u$
 and $\mathcal{L}_{q,\alpha}v=\mu\, w_{\alpha} v$ in the identity  \eqref{SAI} taking into consideration that the eigenvalues are real,
 we obtain
 \[
 (\lambda-\mu)\int_{0}^{a}u(x)v(x)w_{\alpha}(x)\,d_qx=0.
 \]
 Since $\lambda \neq \mu$, then $\int_{0}^{a}u(x)v(x)w_{\alpha}(x)\,d_qx=0$ and the lemma follows.
 \end{proof}

 \bigskip

 In the following we use the fixed point theorem to show that for the  regular qFSLP~\eqref{EVP1}--\eqref{BC2} if the  eigenvalues  satisfying certain condition,
 then the eigenfunctions are unique up to a multiplying constant on the space $C(A_{q,a}^*)$ for any $a>0$. We also prove that under a  certain constrain on the domain of solutions,  for any eigenvalue $\lambda$, the eigenfunction is unique up to a constant  multiplying factor.
 Since $I_{q,a^-}^{\alpha}(1)=\frac{a^{\alpha}}{\Gamma_q(\alpha+1)}(qx/a;q)_{\alpha}$,
and
\[
\begin{split}
\left(I_{q,0^+}^{\alpha}I_{q,a^-}^{\alpha}(1)\right)(x)&=I_{q,0^+}^{\alpha}\dfrac{a^{\alpha}(qx/a;q)_{\alpha}}{\Gamma_q(\alpha+1)}\\
&=\frac{a^{\alpha}x^{\alpha}}{\Gamma_q^{2}(\alpha+1)}{}_2\phi_1\left(q^{-\alpha},q;q^{\alpha+1},q,\frac{xq^{\alpha+1}}{a}\right)=:\phi(x),
\end{split}
\]
for $|\frac{xq^{\alpha+1}}{a}|<1$.
 The general solution of the equation
 \[
 D_{q,a^-}^{\alpha}p(x)\CZ^{\alpha}\phi_0(x)=0
 \]
 takes the form
 \be\label{psi}
 \phi_0(x)=\xi_1+\xi_2I_{q,0^+}^{\alpha}\dfrac{a^{\alpha-1}(qx/a;q)_{\alpha-1}}{\Gamma_q(\alpha)p(x)}=:\xi_1+\xi_2\psi_{\alpha,a}(x).
 \ee

 \begin{lem}
Let $\alpha\in (0,1)$ and
\be\begin{split}\label{delta}
Y_y(x)&:=r(x)y(x)-\lambda w_{\alpha}(x)y(x),\\
\Delta&:=c_1d_2-c_2d_1+c_1d_1\psi_{\alpha,a}(a).
\end{split}
\ee
Assume that $\Delta\neq 0$, then on the space $C(A_{q,a}^{*})$, the regular $q$-Sturm--Liouville problem \eqref{EVP1}--\eqref{BC2} is equivalent to the $q$-integral equation
\be\label{Evp2}
\begin{split}
y(x)&=-I_{q,0^+}^{\alpha}\left(\frac{1}{p(\cdot)}I_{q,a^-}^{\alpha}Y_y\right)(x)+A(x)\int_{0}^{a}Y_y(x)\,d_qx\\
&+B(x) \left(I_{q,0^+}^{\alpha}\frac{1}{p(\cdot)}I_{q,a^-}^{\alpha}Y_y\right)(x)\Big|_{x=a},
\end{split}
\ee
where the coefficients $A(x)$ and $B(x)$ are
\begin{eqnarray*}
A(x)&:=&\frac{c_2}{\Delta}\left[d_2+d_1\left(\psi_{\alpha,a}(a)-\psi_{\alpha,a}(x)\right)\right]\\
B(x)&:=&\frac{d_1}{\Delta}\left[c_1\psi_{\alpha,a}(x)-c_2\right]
\end{eqnarray*}
and the function $\psi_{\alpha,a}$ is defined in \eqref{psi}.
 \end{lem}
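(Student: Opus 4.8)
The plan is to convert the boundary value problem into the integral equation \eqref{Evp2} by successively inverting the two fractional operators that build up $\ML_{q,\alpha}$, and then to fix the two arising integration constants through the boundary conditions \eqref{BC1}--\eqref{BC2}. First I would rewrite the equation: setting $z(x):=p(x)\CZ^{\alpha}y(x)$, the identity $\ML_{q,\alpha}y-\lambda w_\alpha y = D_{q,a^-}^{\alpha}z + Y_y$ shows that \eqref{EVP1} reads $D_{q,a^-}^{\alpha}z(x)=-Y_y(x)$. Applying $I_{q,a^-}^{\alpha}$ to both sides and invoking the inversion formula \eqref{CR5},
\be\label{plan1}
z(x)=-I_{q,a^-}^{\alpha}Y_y(x)+\frac{a^{\alpha-1}}{\Gamma_q(\alpha)}(qx/a;q)_{\alpha-1}\,C_1,\qquad C_1:=\left(I_{q,a^-}^{1-\alpha}z\right)(\tfrac{a}{q}).
\ee
Dividing \eqref{plan1} by $p(x)$, applying $I_{q,0^+}^{\alpha}$, and using \eqref{CR4} together with the definition \eqref{psi} of $\psi_{\alpha,a}$, I obtain the general form
\be\label{plan2}
y(x)=C_0-F(x)+C_1\psi_{\alpha,a}(x),\qquad C_0:=y(0),\quad F(x):=I_{q,0^+}^{\alpha}\Big(\tfrac{1}{p(\cdot)}I_{q,a^-}^{\alpha}Y_y\Big)(x),
\ee
where $\psi_{\alpha,a}(0)=F(0)=0$ since $I_{q,0^+}^{\alpha}$ annihilates a bounded argument at the origin.

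Next I would evaluate the two boundary functionals occurring in \eqref{BC1}--\eqref{BC2}. Applying $I_{q,a^-}^{1-\alpha}$ to \eqref{plan1}, the semigroup property \eqref{sgp2} turns the first term into $-I_{q,a^-}^{1}Y_y(x)=-\int_{qx}^{a}Y_y(t)\,d_qt$, while the action of $I_{q,a^-}^{1-\alpha}$ on the kernel $a^{\alpha-1}(qx/a;q)_{\alpha-1}$ (read off from the worked example for $I_{q,b^-}^{\alpha}b^{\mu}(qx/b;q)_\mu$ with $b=a$, $\mu=\alpha-1$) returns the constant $\Gamma_q(\alpha)$, so that
\be\label{plan3}
\left(I_{q,a^-}^{1-\alpha}z\right)(x)=C_1-\int_{qx}^{a}Y_y(t)\,d_qt.
\ee
Evaluating \eqref{plan3} at $x=a/q$ returns $C_1$, confirming the self-consistency of the constant, while at $x=0$ it gives $C_1-\int_0^a Y_y\,d_qt$. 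This is the step I expect to be the main obstacle, since it is here that the right-sided operator and its kernel must be handled exactly via \eqref{sgp2} and the example computation.

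Substituting $y(0)=C_0$, the value $\left(I_{q,a^-}^{1-\alpha}z\right)(0)$, and $y(a)=C_0+C_1\psi_{\alpha,a}(a)-F(a)$ into \eqref{BC1} and \eqref{BC2} produces the linear system
\be\label{plan4}
\begin{cases}
c_1C_0+c_2C_1=c_2\displaystyle\int_0^a Y_y\,d_qt,\\[4pt]
d_1C_0+\big(d_2+d_1\psi_{\alpha,a}(a)\big)C_1=d_1F(a),
\end{cases}
\ee
whose coefficient determinant is exactly $\Delta=c_1d_2-c_2d_1+c_1d_1\psi_{\alpha,a}(a)$ from \eqref{delta}. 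Since $\Delta\neq0$, Cramer's rule yields unique $C_0,C_1$; inserting them into \eqref{plan2} and collecting the coefficients of $\int_0^aY_y\,d_qt$ and of $F(a)$ reproduces precisely the functions $A(x)$ and $B(x)$, giving \eqref{Evp2}.

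Finally, for the converse direction I would start from \eqref{Evp2}, recast it in the form \eqref{plan2}, apply $\CZ^{\alpha}$ using \eqref{CR3-}, multiply by $p$, and apply $D_{q,a^-}^{\alpha}$; the homogeneous kernel term is annihilated because $D_{q,a^-}^{\alpha}\big[a^{\alpha-1}(qx/a;q)_{\alpha-1}/\Gamma_q(\alpha)\big]=0$, while \eqref{CR2} gives $D_{q,a^-}^{\alpha}I_{q,a^-}^{\alpha}Y_y=Y_y$, so that \eqref{EVP1} is recovered. The boundary conditions then hold automatically because $C_0,C_1$ were chosen to solve \eqref{plan4}.
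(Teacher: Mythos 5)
Your proposal is correct and takes essentially the same route as the paper's own proof: you reduce \eqref{EVP1} to the general form $y(x)=\xi_1+\xi_2\psi_{\alpha,a}(x)-I_{q,0^+}^{\alpha}\bigl(\tfrac{1}{p(\cdot)}I_{q,a^-}^{\alpha}Y_y\bigr)(x)$, extract from the boundary conditions exactly the same $2\times 2$ linear system with determinant $\Delta$, and solve it by Cramer's rule. The only differences are presentational: you derive the homogeneous solution and the boundary functionals by explicit step-by-step inversion (via \eqref{CR5}, \eqref{sgp2}, and \eqref{CR4}) and spell out the converse direction, both of which the paper leaves implicit.
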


 \begin{proof}
 Using \eqref{delta},  we can rewrite \eqref{EVP1} as follows:
\[
D_{q,a^-}^{\alpha}p(x)\CZ^{\alpha}\left[y(\cdot)+I_{q,0^+}^{\alpha}\frac{1}{p(\cdot)}I_{q,a^-}^{\alpha}Y_y\right](x)=0.
\]
Thus,
\[
y(x)+I_{q,0^+}^{\alpha}\left[\frac{1}{p(\cdot)}I_{q,a^-}^{\alpha}Y_y\right](x)=\xi_1+\xi_2\psi_{\alpha,a}(x).
\]
From the boundary conditions \eqref{BC1}--\eqref{BC2},
we have
\begin{eqnarray*}
\xi_1&=&y(0)\\
\xi_2&=&\int_{0}^{a}Y_y(x)\,d_qx+\left(I_{q,a^-}^{1-\alpha}p\,\CZ^{\alpha}y\right)(0)\\
\xi_1+\xi_2 I_{q,0^+}^{\alpha}\dfrac{a^{\alpha-1}(qx/a;q)_{\alpha-1}}{\Gamma_q(\alpha) p(x)}\Big|_{x=a}&=&y(a)+\left(I_{q,0^+}^{\alpha}\frac{1}{p}I_{q,a^-}^{\alpha}Y_y\right)(a)\\
\xi_2&=&\left(I_{q,a^-}^{1-\alpha}p\,\CZ^{\alpha}Y_y\right)(\frac{a}{q}).
\end{eqnarray*}
This leads to the system of equations
\begin{eqnarray*}
c_1\xi_1+c_2\xi_2&=&c_2X\\
d_1\xi_1+(d_2+d_1\psi_{\alpha,a}(a))\xi_2&=&d_1 Z,
\end{eqnarray*}
where
\[
X:=\int_{0}^{a}Y(y)(x)\,d_qx,\quad\mbox{and}\quad Z:=\left(I_{q,0^+}^{\alpha}\frac{1}{p}I_{q,a^-}^{\alpha}Y_y\right)(a).
\]
Since $\Delta\neq 0$, the solution for coefficients $\xi_j$, $j=1,2$, is unique:

\[
\xi_1=\frac{c_2}{\Delta}\left(X\left(d_2+d_1\psi_{\alpha,a}(a)\right)-d_1Z\right),\qquad\xi_2=\frac{d_1}{\Delta}\left(c_1Z-c_2X\right).
\]
\end{proof}
Let us introduce the notation
\begin{align}
A&:=\norm{A(x)}&m_p&:=\inf_{x\in\,A_{q,a}}|p(x)|\\
B&:=\norm{B(x)}&M_{\phi}&:=\norm{\phi(x)}.
\end{align}
\begin{thm}\label{EUT:1} Let $0<\alpha<1$. Assume that $\Delta\neq 0$. Then unique $q$-regular at zero function $y_{\lambda}$ for the regular $q$FSLP \eqref{EVP1} with
the boundary conditions \eqref{BC1}--\eqref{BC2} corresponding to each eigenvalue obeying
\be\label{EVC} \norm{r-\lambda w_{\alpha}}< \frac{m_p}{M_{\phi}+B\phi(a)+A am_p}\ee
exists and such eigenvalue is simple.
\end{thm}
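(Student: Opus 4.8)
The plan is to recast the problem as a fixed-point equation on the Banach space $C(A_{q,a}^*)$ and apply the Banach contraction principle. Since $\Delta\neq0$, the preceding lemma tells us that $y\in C(A_{q,a}^*)$ solves \eqref{EVP1}--\eqref{BC2} if and only if $y$ is a fixed point of the operator $T$ given by the right-hand side of \eqref{Evp2},
\[
(Ty)(x):=-I_{q,0^+}^{\alpha}\left(\frac{1}{p}I_{q,a^-}^{\alpha}Y_y\right)(x)+A(x)\int_{0}^{a}Y_y(u)\,d_qu+B(x)\left(I_{q,0^+}^{\alpha}\frac{1}{p}I_{q,a^-}^{\alpha}Y_y\right)(a),
\]
with $Y_y=(r-\lambda w_{\alpha})y$. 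First I would verify that $T$ maps $C(A_{q,a}^*)$ into itself: for $y\in C(A_{q,a}^*)$ we have $Y_y\in C(A_{q,a}^*)$, and then $I_{q,a^-}^{\alpha}Y_y\in C(A_{q,a}^*)$ by the mapping properties of the fractional $q$-integrals listed before Lemma~\ref{Lem:1}; division by $p$ preserves $q$-regularity at zero because $|p|\ge m_p>0$; and $I_{q,0^+}^{\alpha}$ again returns an element of $C(A_{q,a}^*)$ by \eqref{ineq1}. The coefficients $A(x),B(x)$ are bounded multiples of the $q$-regular function $\psi_{\alpha,a}$, so all three summands, hence $Ty$, lie in $C(A_{q,a}^*)$.

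The heart of the argument is to show that \eqref{EVC} forces $T$ to be a contraction. For $y_1,y_2\in C(A_{q,a}^*)$ set $w=y_1-y_2$, so that $Y_{y_1}-Y_{y_2}=(r-\lambda w_{\alpha})w$ and $|Y_{y_1}(x)-Y_{y_2}(x)|\le\norm{r-\lambda w_{\alpha}}\,\norm{w}$ for all $x$. The key observation is that for $0<\alpha<1$ the kernels $(qt/x;q)_{\alpha-1}$ and $t^{\alpha-1}(qx/t;q)_{\alpha-1}$ are nonnegative, so both $I_{q,0^+}^{\alpha}$ and $I_{q,a^-}^{\alpha}$ are positivity preserving and monotone. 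Combining this with $1/|p|\le 1/m_p$ and the function $\phi$ introduced before \eqref{psi}, I would dominate the first summand pointwise by
\[
\left|I_{q,0^+}^{\alpha}\frac{1}{p}I_{q,a^-}^{\alpha}(Y_{y_1}-Y_{y_2})\right|\le\frac{\norm{r-\lambda w_{\alpha}}\,\norm{w}}{m_p}\,I_{q,0^+}^{\alpha}I_{q,a^-}^{\alpha}(1)=\frac{\norm{r-\lambda w_{\alpha}}\,\norm{w}}{m_p}\,\phi,
\]
which contributes $M_{\phi}\norm{r-\lambda w_{\alpha}}\norm{w}/m_p$ to $\norm{Ty_1-Ty_2}$; the same estimate taken at $x=a$ bounds the third summand by $B\,\phi(a)\norm{r-\lambda w_{\alpha}}\norm{w}/m_p$, while $\left|\int_{0}^{a}(Y_{y_1}-Y_{y_2})\,d_qu\right|\le a\,\norm{r-\lambda w_{\alpha}}\norm{w}$ bounds the middle summand by $A\,a\,\norm{r-\lambda w_{\alpha}}\norm{w}$. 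Adding the three contributions and factoring $1/m_p$ gives
\[
\norm{Ty_1-Ty_2}\le\frac{\norm{r-\lambda w_{\alpha}}}{m_p}\left(M_{\phi}+B\phi(a)+A\,a\,m_p\right)\norm{y_1-y_2},
\]
and condition \eqref{EVC} is precisely the requirement that this Lipschitz constant be strictly less than $1$.

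With the contraction established, the Banach fixed-point theorem yields a unique fixed point $y_{\lambda}\in C(A_{q,a}^*)$, and by the equivalence this is the sought $q$-regular at zero solution of \eqref{EVP1}--\eqref{BC2} attached to $\lambda$. Simplicity is then immediate: any two eigenfunctions associated with $\lambda$ are both fixed points of the contraction $T$ and therefore coincide, so the eigenspace is at most one-dimensional. I expect the main obstacle to be the contraction estimate, and specifically the justification that $I_{q,0^+}^{\alpha}$ and $I_{q,a^-}^{\alpha}$ are positivity preserving so that the nested operator $I_{q,0^+}^{\alpha}\frac1p I_{q,a^-}^{\alpha}$ can be dominated by the explicit function $\phi$ and the constants $M_{\phi},\phi(a)$ extracted cleanly; the secondary point requiring care is the bookkeeping that the three summand bounds recombine into exactly the denominator $M_{\phi}+B\phi(a)+A\,a\,m_p$ of \eqref{EVC}.
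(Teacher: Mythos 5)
Your proposal is correct and follows essentially the same route as the paper: reformulate the problem via the equivalence lemma as a fixed point of the operator $T$, establish the contraction estimate $\norm{Tg-Th}\leq \norm{r-\lambda w_{\alpha}}\left(\frac{M_{\phi}}{m_p}+Aa+\frac{B\phi(a)}{m_p}\right)\norm{g-h}$, note that \eqref{EVC} makes this Lipschitz constant less than $1$, and invoke the Banach fixed point theorem to get existence, uniqueness, and hence simplicity. The only difference is that you fill in details the paper leaves implicit (the self-mapping property of $T$ and the kernel-positivity argument that dominates $I_{q,0^+}^{\alpha}\frac{1}{p}I_{q,a^-}^{\alpha}$ by the explicit function $\phi$), which is exactly the estimate underlying the paper's inequality \eqref{Ineq4}.
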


\begin{proof}
One can verify that \eqref{EVP1} can be interpreted as a fixed point for the mapping
$T:C(A_{q,a}^*)\to  C(A_{q,a}^*)$ defined by
\[
\begin{split}
Tf(x)&=-I_{q,0^+}^{\alpha}\left[\frac{1}{p(\cdot)}I_{q,a^-}^{\alpha}Y_f\right](x)+A(x)\int_{0}^{a}Y_f(x)\,d_qx\\
&+B(x) I_{q,0^+}^{\alpha}\left[\frac{1}{p(\cdot)}I_{q,a^-}^{\alpha}Y_f(\cdot)\right](x)\Big|_{x=a},
\end{split}
\]
using the estimate
\[\norm{Y_g-Y_h}\leq \norm{g-h}\norm{r-\lambda w_{\alpha}},
\]
then
\begin{eqnarray}\label{Ineq4}
\norm{Tg-Th}&\leq& \norm{I_{q,0^+}^{\alpha}\frac{1}{p(x)}I_{q,a}^{\alpha}(Y_g-Y_h)}+\norm{A(x)}\norm{\int_{0}^{a}Y_g(x)-Y_h(x)\,d_qx}\non\\&&+
\norm{B(x)}\left|I_{q,0^+}^{\alpha}\left[\frac{1}{p(\cdot)}I_{q,a^-}^{\alpha}\left(Y_g-Y_h\right)(\cdot)\right](x)\Big|_{x=a}\right|\non\\
&\leq& \norm{g-h}\norm{r-\lambda w_{\alpha}}\left(\dfrac{\norm{\phi}}{m_p}+Aa+\frac{B\phi(a)}{m_p}\right)\\
&=&\norm{g-h}L,\non
\end{eqnarray}
where $L=\norm{r-\lambda w_{\alpha}}\left(\dfrac{M_\phi}{m_p}+Aa+\frac{B\phi(a)}{m_p}\right)$. Using the assumption of the theorem, we conclude that there is a unique fixed point denoted by $y_{\lambda}\in C(A_{q,a}^*)$ satisfies \eqref{EVP1} or equivalently \eqref{Evp2} and the boundary conditions \eqref{BC1}--\eqref{BC2}. Therefore, such eigenvalue is simple.

\end{proof}

\begin{thm}\label{EUT:1-0} Let $0<\alpha<1$ and $k_i$ $(i=0,1)$ be  real numbers. Assume that the functions $p,\,r$, and $w_{\alpha}$ are $C(A_{q,a}^*)$ functions such that $\inf_{x\in A_{q,a}}\,p(x)>0$. Then, there exists $m_0\in\mathbb{N}_0$ such that
 the regular $q$FSLP \eqref{EVP1} with
the initial conditions
\be \label{con:0}y(0)=k_0,\quad   \left(I_{q,a^-}^{1-\alpha}p{}^cD_{q}^{\alpha}y\right)(0)=k_1\ee
has  a unique solution in $C(A_{q,aq^{m_0}}^*)$.
\end{thm}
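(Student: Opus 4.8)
The plan is to recast the initial value problem \eqref{EVP1}, \eqref{con:0} as a fixed point equation for a contraction on the complete space $C(A_{q,aq^{m_0}}^*)$, exactly mirroring the strategy of Theorem~\ref{EUT:1}, but now exploiting the freedom to shrink the domain so that the contraction constant can be forced below $1$ \emph{regardless} of $\lambda$. First I would integrate \eqref{EVP1} using the inversion formulas from the fundamental lemma: applying $I_{q,a^-}^{\alpha}$ and then $I_{q,0^+}^{\alpha}$ and using \eqref{CR2}, \eqref{CR4} to undo $D_{q,a^-}^{\alpha}$ and ${}^cD_{q,0^+}^{\alpha}$, together with the representation \eqref{psi} of the homogeneous solution, I obtain an equivalent $q$-integral equation
\[
y(x)=k_0+k_1\,\psi_{\alpha,a}(x)-\left(I_{q,0^+}^{\alpha}\frac{1}{p(\cdot)}I_{q,a^-}^{\alpha}Y_y\right)(x),
\]
where the constants $k_0,k_1$ are fixed by the initial data rather than determined by a $2\times2$ system (this is the simplification that makes the $\Delta\neq 0$ hypothesis unnecessary). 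The right-hand side defines a map $T$ on $C(A_{q,aq^{m_0}}^*)$.

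The key estimate is then the same Lipschitz bound as in \eqref{Ineq4}: using $\norm{Y_g-Y_h}\leq\norm{g-h}\,\norm{r-\lambda w_{\alpha}}$ and the composition inequalities \eqref{ineq1} and the right-sided analogue (item~6 in the list preceding Lemma~\ref{Lem:1}), I would get
\[
\norm{Tg-Th}\leq \norm{g-h}\,\norm{r-\lambda w_{\alpha}}\,\frac{c_{\alpha,0}}{m_p}\cdot\frac{(aq^{m_0})^{\alpha}}{\Gamma_q(\alpha+1)}.
\]
The crucial point is that both $I_{q,0^+}^{\alpha}$ and $I_{q,a^-}^{\alpha}$ contribute a factor proportional to a power of the length of the interval, so when the domain is $A_{q,aq^{m_0}}^*$ the constant carries a factor $(aq^{m_0})^{2\alpha}$ (up to the $q$-Gamma normalizations). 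Since $0<q<1$, this factor tends to $0$ as $m_0\to\infty$, so there exists $m_0\in\mathbb{N}_0$ making the contraction constant strictly less than $1$ for the \emph{given} fixed $\lambda$, with no smallness condition on $\norm{r-\lambda w_{\alpha}}$ imposed. Here I would use that $p,r,w_{\alpha}\in C(A_{q,a}^*)$ with $\inf p>0$ guarantees $\norm{r-\lambda w_{\alpha}}<\infty$ and $m_p>0$, so the finite constant being multiplied by $(aq^{m_0})^{2\alpha}$ is genuinely driven to zero.

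The Banach fixed point theorem on the complete normed space $C(A_{q,aq^{m_0}}^*)$ then yields a unique fixed point $y$, which by the equivalence established in the first step is the unique $C(A_{q,aq^{m_0}}^*)$ solution of \eqref{EVP1} satisfying \eqref{con:0}. I expect the main obstacle to be verifying carefully that shrinking the domain really does scale the operator norm by a power of $aq^{m_0}$: one must check that restricting to $A_{q,aq^{m_0}}^*$ replaces $a$ by $aq^{m_0}$ uniformly in the bounds \eqref{ineq1} and item~6, in particular that the right-sided operator $I_{q,a^-}^{\alpha}$ (whose definition references the endpoint $a$) is correctly re-anchored at $aq^{m_0}$ so that its norm bound also acquires the factor $(aq^{m_0})^{\alpha}$. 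Once the domain-dependence of both constants is pinned down, the existence of a suitable $m_0$ is immediate from $q^{m_0}\to 0$, and uniqueness up to the prescribed initial data follows from uniqueness of the fixed point.
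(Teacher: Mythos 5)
Your overall strategy (a fixed point argument on a shrinking domain so that the contraction constant beats any fixed $\lambda$) is exactly the paper's strategy, but the integral equation you start from is wrong, and the equivalence on which everything rests fails. To invert the outer operator $D_{q,a^-}^{\alpha}$ you need \eqref{CR5}, not \eqref{CR2}: the identity $I_{q,a^-}^{\alpha}D_{q,a^-}^{\alpha}f(x)=f(x)-\frac{a^{\alpha-1}}{\Gamma_q(\alpha)}(qx/a;q)_{\alpha-1}\bigl(I_{q,a^-}^{1-\alpha}f\bigr)(\frac{a}{q})$ shows that the constant of integration produced by the right-sided derivative is the value of $I_{q,a^-}^{1-\alpha}\bigl(p\,\CZ^{\alpha}y\bigr)$ at $x=a/q$, whereas the datum $k_1$ in \eqref{con:0} prescribes this quantity at $x=0$. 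The two differ by a term depending on the unknown: writing $y=\xi_1+\xi_2\psi_{\alpha,a}-I_{q,0^+}^{\alpha}\bigl[\frac{1}{p}I_{q,a^-}^{\alpha}Y_y\bigr]$, applying $I_{q,a^-}^{1-\alpha}$ and using the semigroup property \eqref{sgp2} gives $\bigl(I_{q,a^-}^{1-\alpha}p\,\CZ^{\alpha}y\bigr)(0)=\xi_2-\int_0^aY_y(t)\,d_qt$, hence $\xi_2=k_1+\int_0^aY_y(t)\,d_qt$, not $k_1$. The correct equation is
\[
y(x)=k_0+\Bigl(k_1+\int_0^aY_y(t)\,d_qt\Bigr)\psi_{\alpha,a}(x)-I_{q,0^+}^{\alpha}\Bigl[\frac{1}{p(\cdot)}I_{q,a^-}^{\alpha}Y_y\Bigr](x),
\]
and the term $\psi_{\alpha,a}(x)\int_0^aY_y\,d_qt$ that you dropped is precisely the second term of the paper's map \eqref{Tdef-0} (written there with $k_0=k_1=0$). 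With your map, a fixed point solves \eqref{EVP1} with $y(0)=k_0$ but with $\bigl(I_{q,a^-}^{1-\alpha}p\,\CZ^{\alpha}y\bigr)(0)=k_1-\int_0^aY_y\,d_qt\neq k_1$; so a fixed point does not solve the stated problem, and a solution of the stated problem is not a fixed point. The claimed ``simplification that makes $\Delta\neq0$ unnecessary'' is an artifact of this error: what actually removes $\Delta$ is that both data sit at $x=0$, making the resulting system for $\xi_1,\xi_2$ triangular.

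The obstacle you flag at the end is also real, but it resolves differently from what you hope: $I_{q,a^-}^{\alpha}$ cannot be re-anchored at $aq^{m_0}$, since the equation and the data fix the anchor at $a$, and for $x\in A_{q,aq^{m_0}}$ the inner integral still runs over $(qx,a)$, so its bound keeps the factor $a^{\alpha}$. Only one shrinking factor is available, namely $x^{\alpha}\le(aq^{m_0})^{\alpha}$ coming from the outer $I_{q,0^+}^{\alpha}$ and, once you restore the missing term, from $\psi_{\alpha,a}$ via the paper's inequality $|\psi_{\alpha,a}(x)|\le\frac{C}{m_p}a^{\alpha}x^{\alpha}$. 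This yields the paper's contraction constant $C\,\norm{r-\lambda w_{\alpha}}\,m_p^{-1}a^{2\alpha}q^{m_0\alpha}$, which still tends to $0$ as $m_0\to\infty$ for each fixed $\lambda$, so your shrinking-domain conclusion survives with the weaker (correct) estimate. Finally, note a structural difference: the paper never runs Banach's theorem to produce a solution; it takes two solutions $y_1,y_2$, shows $z=y_1-y_2$ satisfies the zero-data fixed-point relation, and concludes $z\equiv0$ on $A_{q,aq^{m_0}}$. If you want existence as well, as your proposal claims, you must also face the fact that $Tf$ at points of $A_{q,aq^{m_0}}$ uses values of $f$ on all of $A_{q,a}$ (through $I_{q,a^-}^{\alpha}$ and $\int_0^a$), so $T$ is not a self-map of $C(A_{q,aq^{m_0}}^*)$ without an additional extension argument --- a difficulty the paper's uniqueness-only formulation avoids because $z$ is already defined on all of $A_{q,a}^*$.
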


\begin{proof}
Let  $y_1$ and $y_2$ be two solutions of \eqref{EVP1} satisfying \eqref{con:0}. Set $z:=y_1-y_2$, hence $z$ is a solution of \eqref{EVP1} with the conditions
\be\label{con:2}z(0)=0,\quad   \left(I_{q,a^-}^{1-\alpha}p{}^cD_{q}^{\alpha}z\right)(0)=0.\ee
In this case, simple manipulations show that  \eqref{EVP1} can be interpreted as a fixed point for the mapping
$T:C(A_{q,a}^*)\to  C(A_{q,a}^*)$ defined by
\be\label{Tdef-0}
\begin{split}
Tf(x)&=-I_{q,0^+}^{\alpha}\left[\frac{1}{p(\cdot)}I_{q,a^-}^{\alpha}Y_f\right](x)\\
&+\psi_{\alpha,a}(x)\int_{0}^{a}Y_f(x)\,d_qx,
\end{split}
\ee
using the estimate
\be\label{E2}
\norm{Y_g-Y_h}\leq \norm{g-h}\norm{r-\lambda w_{\alpha}},
\ee
and  the inequality
\[|\psi_{\alpha,a}(x)|\leq  \frac{C}{m_p}a^{\alpha}x^{\alpha},\; C:=\dfrac{q^{-\alpha}[\alpha]}{(q^{\alpha+1};q)_{\infty}\Gamma_q^2(\alpha+1)},\]
for all $x\in A_{q,a}$. Therefore if $x\in A_{q,aq^m}$, $m\in \mathbb{N}$, then
\[\norm{T(g-h)}\leq C \dfrac{\norm{r-\lambda w_{\alpha}}}{m_p}a^{2\alpha} q^{m\alpha}\norm{g-h}.\]
 We can choose $m_0\in \mathbb{N}$ such that
\[C \dfrac{\norm{r-\lambda w_{\alpha}}}{m_p}a^{2\alpha} q^{m_0\alpha}<1.\]
Thus $T:C(A_{q,aq^{m_0}}^*)\to  C(A_{q,aq^{m_0}}^*)$ is a contraction mapping. Hence $z$ is the unique  fixed point of \eqref{Tdef-0}. Therefore,  $z\equiv0$ on $A_{q,aq^{m_0}}$.I.e.
$y_1=y_2$ on $A_{q,aq^{m_0}}$.

\end{proof}

Another version of Theorem~\ref{EUT:1} holds if we release the conditions on  the functions $r$ and $w_{\alpha}$ to be only $L_q^2(A_{q,a}^*)$ functions.

\begin{thm}\label{EUT:1-1} Let $\frac{1}{2}<\alpha<1$. Assume that the functions $r$, and $w_{\alpha}$ are $ L_q^2(A_{q,a}^*)$ functions and $p$ is a functions satisfying
 $\inf_{x\in A_{q,a}}\,p(x)>0$.
  If $\Delta\neq 0$, then unique $q$-regular at zero function $y_{\lambda}$ for the regular $q$FSLP \eqref{EVP1} with
the boundary conditions \eqref{BC1}--\eqref{BC2} corresponding to each eigenvalue obeying
\be\label{EVC2} \norm{r-\lambda w_{\alpha}}_2< \frac{m_p}{(1+B)a^{2\alpha-\frac{1}{2}}c_{\alpha}+A \sqrt{a}m_p},\;c_{\alpha}:=\frac{(1-q)^{\alpha-\frac{1}{2}}}{(q;q)_{\infty}\sqrt{1-q^{2\alpha-1}}}\ee
if $\frac{1}{2}<\alpha<1$,
and obeying
\be\label{EVC2-2} \norm{r-\lambda w_{\alpha}}_2< \frac{m_p}{(1+B)a^{2\alpha-\frac{1}{2}}\gamma_{\alpha}+A \sqrt{a}m_p},
\ee
where
\[
\gamma_{\alpha}:=\frac{\Gamma_q(\alpha+\frac{1}{2})}{(q^{\alpha};q)_{\infty}\Gamma_q(2\alpha+\frac{1}{2})}\sqrt{\frac{1-q}{1-q^{1-2\alpha}}}\]
if $\frac{1}{4}<\alpha<\frac{1}{2}$
exists and such eigenvalue is simple.
\end{thm}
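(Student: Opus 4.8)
\section*{Proof proposal}

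The plan is to repeat the contraction-mapping argument of Theorem~\ref{EUT:1} with the \emph{same} fixed point map $T:C(A_{q,a}^*)\to C(A_{q,a}^*)$ used there, but to measure the inhomogeneous term $Y_f=(r-\lambda w_{\alpha})f$ in the $L_q^2$ norm instead of the sup norm, since $r$ and $w_{\alpha}$ are now only $L_q^2(A_{q,a}^*)$ functions. First I would check that the equivalence between the qFSLP \eqref{EVP1}--\eqref{BC2} and the $q$-integral equation \eqref{Evp2} is unaffected by this weakening: for $y\in C(A_{q,a}^*)$ one has $|y|\le\norm{y}$ pointwise, so $Y_y\in L_q^2(A_{q,a}^*)$, every $q$-integral occurring in the derivation of \eqref{Evp2} still converges, and $\Delta\neq0$ again determines $\xi_1,\xi_2$ uniquely. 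Thus an eigenfunction is exactly a fixed point of $T$, and the task reduces to showing that $T$ is a well-defined contraction on $C(A_{q,a}^*)$, with the starting estimate $\norm{Y_g-Y_h}_2\le\norm{g-h}\,\norm{r-\lambda w_{\alpha}}_2$.

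For $\tfrac12<\alpha<1$ I would estimate the three terms of $Tg-Th$ in the sup norm. The first and third terms both involve the composite operator $I_{q,0^+}^{\alpha}\tfrac1p I_{q,a^-}^{\alpha}$, which I treat as a map $L_q^2\to C(A_{q,a}^*)$: since $|p(x)|\ge m_p$ and $I_{q,a^-}^{\alpha}$ is bounded on $L_q^2$ (for $\alpha\neq\tfrac12$), the inner part lands in $L_q^2$, and then \eqref{ineq0}---which is exactly the place where $\alpha>\tfrac12$ is used---carries it into $C(A_{q,a}^*)$. The resulting constant, after a $q$-beta evaluation of the kernel $q$-integral, can be written as $a^{2\alpha-1/2}c_{\alpha}/m_p$; since the third term carries the extra factor $\norm{B(x)}=B$, the first and third together are bounded by $(1+B)\tfrac{a^{2\alpha-1/2}c_{\alpha}}{m_p}\norm{Y_g-Y_h}_2$. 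The middle term is handled by $\abs{\int_0^a (Y_g-Y_h)(x)\,d_qx}\le\norm{Y_g-Y_h}_1\le\sqrt a\,\norm{Y_g-Y_h}_2$ together with $\norm{A(x)}=A$. Collecting the contributions yields $\norm{Tg-Th}\le L\norm{g-h}$ with
\[
L=\norm{r-\lambda w_{\alpha}}_2\left(\frac{(1+B)a^{2\alpha-1/2}c_{\alpha}}{m_p}+A\sqrt a\right),
\]
and the hypothesis \eqref{EVC2} is precisely the statement $L<1$. The Banach fixed point theorem then produces a unique $y_{\lambda}\in C(A_{q,a}^*)$ solving \eqref{EVP1}, \eqref{BC1}--\eqref{BC2}; uniqueness up to a scalar multiple is the simplicity of the eigenvalue.

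The regime $\tfrac14<\alpha<\tfrac12$ is where I expect the real difficulty, and it is the only genuinely new estimate. Here \eqref{ineq0} is unavailable, because a single left-sided integral of order $\alpha<\tfrac12$ need not send $L_q^2$ into $C(A_{q,a}^*)$; consequently the composite $I_{q,0^+}^{\alpha}\tfrac1p I_{q,a^-}^{\alpha}$ cannot be split across the two separate mapping inequalities. The remedy is to estimate it directly as a single operator whose combined smoothing order is $2\alpha$: after using $|p(x)|\ge m_p$ and writing out the iterated $q$-integral, a Cauchy--Schwarz inequality in the inner variable reduces boundedness into $C(A_{q,a}^*)$ to the finiteness of the associated order-$2\alpha$ kernel $q$-integral, which holds exactly because $2\alpha>\tfrac12$, i.e.\ $\alpha>\tfrac14$---the same threshold that \eqref{ineq0} imposes on a single integral of order $>\tfrac12$. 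Evaluating that $q$-integral by the $q$-beta function produces the constant $\gamma_{\alpha}$ in place of $c_{\alpha}$, and the rest of the argument (the middle term, the assembly of $L$, and the reduction of \eqref{EVC2-2} to $L<1$, followed by the fixed point theorem) is identical to the first regime. The delicate steps are therefore the correct identification of the composite as an order-$2\alpha$ operator and the $q$-beta bookkeeping behind $c_{\alpha}$ and $\gamma_{\alpha}$; everything else transcribes directly from the proof of Theorem~\ref{EUT:1}.
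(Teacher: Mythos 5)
Your framework (the fixed-point map $T$ of \eqref{Tdef}, the equivalence with \eqref{Evp2}, the estimate $\norm{Y_g-Y_h}_2\le\norm{g-h}\,\norm{r-\lambda w_{\alpha}}_2$, the middle-term bound $\abs{\int_0^a(Y_g-Y_h)(x)\,d_qx}\le\sqrt{a}\,\norm{Y_g-Y_h}_2$, and the Banach fixed-point conclusion) is the paper's, and your Case 2 ($\tfrac14<\alpha<\tfrac12$) coincides in substance with the paper's proof: pointwise Cauchy--Schwarz in the inner variable gives $\abs{I_{q,a^-}^{\alpha}(Y_g-Y_h)(t)}\le\sigma_{\alpha}\norm{g-h}\norm{r-\lambda w_{\alpha}}_2\,t^{\alpha-\frac12}$, and the $q$-beta evaluation $I_{q,0^+}^{\alpha}t^{\alpha-\frac12}=\frac{\Gamma_q(\alpha+\frac12)}{\Gamma_q(2\alpha+\frac12)}x^{2\alpha-\frac12}$ stays bounded on $A_{q,a}$ precisely because $\alpha>\tfrac14$; that is the origin of $\gamma_{\alpha}$. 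Where you deviate from the paper is Case 1, and that is where the gap lies.

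In Case 1 the paper runs the \emph{same} device as Case 2: Cauchy--Schwarz pointwise on the inner integral \eqref{E2-2}, with the kernel integral now bounded by $\frac{a^{2\alpha-1}}{(q^{\alpha};q)_{\infty}^2}\frac{1-q}{1-q^{2\alpha-1}}$ because $2\alpha-1>0$ makes the upper endpoint dominate (this is where $\alpha>\tfrac12$ enters), followed by $I_{q,0^+}^{\alpha}$ applied to a constant; that computation is how $c_{\alpha}$ arises. You instead factor the composite operator as $L_q^2\to L_q^2\to C(A_{q,a}^*)$, i.e.\ the $L_q^2\to L_q^2$ bound for $I_{q,a^-}^{\alpha}$ followed by \eqref{ineq0}. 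That is a legitimately different route, but your claim that it yields the constant $a^{2\alpha-\frac12}c_{\alpha}/m_p$ is false: multiplying the two operator norms gives $a^{2\alpha-\frac12}\tilde c_{\alpha}/m_p$ with
\[
\tilde c_{\alpha}:=\frac{(1-q)^{\alpha}}{\Gamma_q(\alpha)(q;q)_{\infty}\sqrt{(1-q^{2\alpha-1})(1-q^{2\alpha})}}\left(\int_0^1(q\xi;q)_{\alpha-1}^2\,d_q\xi\right)^{1/2},
\qquad
\frac{\tilde c_{\alpha}}{c_{\alpha}}=\frac{(1-q)^{1/2}}{\Gamma_q(\alpha)\sqrt{1-q^{2\alpha}}}\left(\int_0^1(q\xi;q)_{\alpha-1}^2\,d_q\xi\right)^{1/2},
\]
and this ratio is not $\le 1$ in general: as $q\to1^-$ it converges to $\left(\Gamma(\alpha)\sqrt{2\alpha(2\alpha-1)}\right)^{-1}$, which exceeds $1$ for all $\alpha$ close enough to $\tfrac12$ (it blows up as $\alpha\downarrow\tfrac12$; e.g.\ at $\alpha=0.55$ the limit is about $1.8$). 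Hence for $q$ near $1$ and such $\alpha$ there are eigenvalues satisfying \eqref{EVC2} for which your estimate only gives a Lipschitz constant $L>1$, the contraction argument does not close, and the theorem with the stated threshold $c_{\alpha}$ is not proved. The repair is simply to abandon the factorization through \eqref{ineq0} and run your own Case 2 argument in Case 1 (pointwise Cauchy--Schwarz on $I_{q,a^-}^{\alpha}$, then $I_{q,0^+}^{\alpha}$ of a constant), which is exactly what the paper does.
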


\begin{proof}
Similar to the proof of Theorem~\ref{EUT:1},   \eqref{EVP1} can be interpreted as a fixed point for the mapping
$T:C(A_{q,a}^*)\to  C(A_{q,a}^*)$ defined by
\be\label{Tdef}
\begin{split}
Tf(x)&=-I_{q,0^+}^{\alpha}\left[\frac{1}{p(\cdot)}I_{q,a^-}^{\alpha}Y_f\right](x)+A(x)\int_{0}^{a}Y_f(x)\,d_qx\\
&+B(x) I_{q,0^+}^{\alpha}\left[\frac{1}{p(\cdot)}I_{q,a^-}^{\alpha}Y_f(\cdot)\right](a).
\end{split}
\ee

\noindent{\bf Case 1: $\frac{1}{2}<\alpha<1$ }
Using the estimate
\be\label{E2-2}
\begin{gathered}
\norm{I_{q,a^-}^{\alpha}(Y_g-Y_h)}\\\leq \norm{g-h}\norm{r-\lambda w_{\alpha}}_2\frac{1}{\Gamma_q(\alpha)}\left(\int_{qx}^{a}t^{2\alpha-2}(qx/t;q)_{\alpha-1}^2\,d_qt\right)^{1/2},
\end{gathered}
\ee
then if $\alpha>\frac{1}{2}$, we obtain
\[\begin{split}\int_{qx}^{a}t^{2\alpha-2}(qx/t;q)_{\alpha-1}^2\,d_qt&\leq \frac{1}{(q^{\alpha};q)_{\infty}^2}\int_{qx}^{a}t^{2\alpha-2}\,d_qt\\
&=a^{2\alpha-1}\frac{(1-q)}{(q^{\alpha};q)_{\infty}^2}\dfrac{(1-q^{(2\alpha-1)(m+1)})}{1-q^{2\alpha-1}}\leq \frac{a^{2\alpha-1}}{(q^{\alpha};q)_{\infty}^2}\frac{1-q}{1-q^{2\alpha-1}}.
\end{split}
\]
Consequently,
\[\begin{gathered}\left|I_{q,0^+}^{\alpha}\left(\frac{1}{p}I_{q,a^-}^{\alpha}(Y_g-Y_h)\right)(x)\right|\\
\leq\,\norm{g-h}\norm{r-\lambda w_{\alpha}}_2\frac{a^{\alpha-\frac{1}{2}}}{m_p(q^{\alpha};q)_\infty}\sqrt{\frac{1-q}{1-q^{2\alpha-1}}}\left(I_{q,0^+}^{\alpha}1\right)\\
\leq \norm{g-h}\norm{r-\lambda w_{\alpha}}_2\frac{a^{2\alpha-\frac{1}{2}}}{m_p(q;q)_\infty}\frac{(1-q)^{\alpha-\frac{1}{2}}}{\sqrt{1-q^{2\alpha-1}}}.
\end{gathered}\]
A simple manipulation gives
\be \label{ineq5}\left|\int_{0}^{a}\left(Y_g-Y_h\right)(x)\,d_qx\right|\leq \norm{g-h}\norm{r-\lambda w_{\alpha}}_2\sqrt{a}.\ee
Therefore,
\be\label{Ineq4-2}\begin{split}
&\norm{Tg-Th}\\
&\leq  \norm{g-h}\norm{r-\lambda w_{\alpha}}_2\left(\frac{(1+B)}{m_p(q;q)_{\infty}\sqrt{1-q^{2\alpha-1}}}a^{2\alpha-\frac{1}{2}}(1-q)^{\alpha-\frac{1}{2}}+A\sqrt{a}\right)\\
&=L_1\norm{g-h},
\end{split}
\ee
where $L_1=\norm{r-\lambda w_{\alpha}}_2\left(\frac{(1+B)}{m_p(q;q)_{\infty}\sqrt{1-q^{2\alpha-1}}}a^{2\alpha-\frac{1}{2}}(1-q)^{\alpha-\frac{1}{2}}+A\sqrt{a}\right)$. Using the assumption of the theorem, we conclude that there is a unique fixed point denoted by $y_{\lambda}\in C(A_{q,a}^*)$ satisfies \eqref{EVP1} or equivalently \eqref{Evp2} and the boundary conditions \eqref{BC1}--\eqref{BC2}. Therefore, such eigenvalue is simple.
\vskip .5 cm

\noindent{\bf Case 2: $\frac{1}{4}<\alpha<\frac{1}{2}$}
In this case, we have
\[\int_{qx}^{a}t^{2\alpha-2}(qx/t;q)_{\alpha-1}^2\,d_qt\leq \frac{x^{2\alpha-1}}{(q^{\alpha};q)_{\infty}^2}\frac{1-q}{1-q^{1-2\alpha}}.\]
Consequently, if we set $\sigma_{\alpha}:=\sqrt{\frac{1-q}{1-q^{1-2\alpha}}}\frac{1}{(q^{\alpha};q)_{\infty}}$ then
\[\begin{gathered}
\left|I_{q,0^+}^{\alpha}\left(\frac{1}{p}I_{q,a^-}^{\alpha}(Y_g-Y_h)\right)(x)\right|\leq \sigma_{\alpha}\norm{g-h}\norm{r-\lambda w_{\alpha}}_2\left(I_{q,0^+}^{\alpha}t^{\alpha-\frac{1}{2}}\right)(x)\\
\leq \sigma_{\alpha}\norm{g-h}\norm{r-\lambda w_{\alpha}}_2\frac{\Gamma_q(\alpha+\frac{1}{2})}{\Gamma_q(2\alpha+\frac{1}{2})}x^{2\alpha-\frac{1}{2}}\\
\leq \norm{g-h}\norm{r-\lambda w_{\alpha}}_2\gamma_{\alpha}a^{2\alpha-\frac{1}{2}},
 \end{gathered}
\]
for all $x\in A_{q,a}$. Therefore, using \eqref{ineq5}, we obtain
\begin{eqnarray}\label{Ineq4-3}
\norm{Tg-Th}&\leq& \norm{g-h}\norm{r-\lambda w_{\alpha}}_2\left(\frac{(1+B)}{m_p}\gamma_{\alpha}a^{2\alpha-\frac{1}{2}}+A\sqrt{a}\right)\\
&=&\norm{g-h}L_2,\non
\end{eqnarray}
where $L_2=\norm{r-\lambda w_{\alpha}}_2\left(\frac{(1+B)}{m_p}\gamma_{\alpha}a^{2\alpha-\frac{1}{2}}+A\sqrt{a}\right)$. Using the assumption of the theorem, we conclude that there is a unique fixed point denoted by $y_{\lambda}\in C(A_{q,a}^*)$ satisfies \eqref{EVP1} or equivalently \eqref{Evp2} and the boundary conditions \eqref{BC1}--\eqref{BC2}. Therefore, such eigenvalue is simple.

\end{proof}
\section{{\bf The associated  Wronskian} }
\begin{defn}
Let $y_1,\,y_2$  be two  functions in $\mathcal{A}C_q(A_{q,a}^*)$ and let $0<\alpha<1$. Assume that $p\in C(A_{q,a}^*)$ is a positive function.  The $q,p,\alpha$ Wronskian of $y_1$
and $y_2$  is denoted by $W_{q,p,\alpha}(y_1,y_2)$  and defined by
\[
\W(y_1,y_2)(x)=y_1(x)I_{q,a^-}^{1-\alpha}\left(p{}^cD_{q,0^+}^{\alpha}y_2\right)(x)-y_2(x)I_{q,a^-}^{1-\alpha}\left(p{}^cD_{q,0^+}^{\alpha}y_1\right)(x).
\]
\end{defn}
\begin{thm}
If $y_1$ and $y_2$ are two solutions of \eqref{EVP1}--\eqref{BC2}, then
\[\W(y_1,y_2)(0)=\W(y_1,y_2)(a)\]
\end{thm}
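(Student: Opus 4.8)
The plan is to read the assertion as a $q$-analogue of Abel's theorem on the constancy of the Wronskian, and to obtain it as a direct consequence of Green's identity \eqref{GI}, in the same spirit in which the reality of the eigenvalues was established. The only structural input I need is that $y_1$ and $y_2$ solve one and the same equation \eqref{EVP1}, so that they carry a common value of $\lambda$; the boundary conditions \eqref{BC1}--\eqref{BC2} are not in fact required for the equality itself.

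First I would record the two eigenvalue equations $\ML_{q,\alpha}y_1=\lambda w_{\alpha}y_1$ and $\ML_{q,\alpha}y_2=\lambda w_{\alpha}y_2$, and observe that the combination $y_1\,\ML_{q,\alpha}y_2-y_2\,\ML_{q,\alpha}y_1$ vanishes identically on $A_{q,a}^*$: substituting the two equations, the right-hand sides combine into $\lambda w_{\alpha}(y_1y_2-y_2y_1)=0$. Hence $\int_{0}^{a}\left(y_1(x)\ML_{q,\alpha}y_2(x)-y_2(x)\ML_{q,\alpha}y_1(x)\right)\,d_qx=0$. In forming this combination the terms carrying $r$ cancel against one another, so that only the second-order part $D_{q,a^-}^{\alpha}\!\left(p\,\CZ^{\alpha}y_j\right)$ survives into the boundary computation that follows.

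Next I would apply Green's identity \eqref{GI} with $u=y_1$ and $v=y_2$ (equivalently, integrate the surviving second-order part by parts through the adjointness relation \eqref{CP3}). Its left-hand side is exactly the $q$-integral just shown to vanish, so the boundary concomitant on the right must vanish as well. It then remains to recognize this concomitant as $\W(y_1,y_2)(x)$ evaluated between the endpoints $x=0$ and $x=a$, which would give $\W(y_1,y_2)(a)=\W(y_1,y_2)(0)$.

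I expect this last identification to be the delicate step. The concomitant produced by \eqref{GI} evaluates the right-sided integral $I_{q,a^-}^{1-\alpha}\!\left(p\,\CZ^{\alpha}y_j\right)$ at the shifted node $x/q$, whereas the definition of $\W$ evaluates it at $x$. At the lower endpoint this costs nothing, since $x/q=0$ when $x=0$ and the concomitant is then literally $\W(y_1,y_2)(0)$. At the upper endpoint $x=a$ I would reconcile the node $a/q$ with the value at $a$ by invoking the endpoint behaviour of $I_{q,a^-}^{1-\alpha}$ encoded in the reciprocal relations \eqref{CR5} and \eqref{CR3-1}, whose correction terms are precisely the quantities $\left(I_{q,a^-}^{1-\alpha}f\right)(a/q)$ at the node $a/q$, together with the value of the factor $(qx/t;q)_{-\alpha}$ at coincident nodes, so as to match the upper concomitant with $\W(y_1,y_2)(a)$. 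Once this matching at $x=a$ is secured, the equality $\W(y_1,y_2)(a)=\W(y_1,y_2)(0)$ follows immediately.
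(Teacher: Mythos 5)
Your overall strategy---pass through Green's identity \eqref{GI} and then recognize the boundary concomitant as the Wronskian---is not the paper's route, and it has a genuine gap exactly at the step you yourself flag as delicate. The tools you propose there, \eqref{CR5} and \eqref{CR3-1}, cannot do the job: they are composition identities for $I_{q,a^-}^{\alpha}D_{q,a^-}^{\alpha}$ and $\CM^{\alpha}I_{q,a^-}^{\alpha}$, in which quantities like $\left(I_{q,a^-}^{\beta}f\right)(a/q)$ appear only as coefficients of correction terms; they give no relation between $\left(I_{q,a^-}^{1-\alpha}p\,\CZ^{\alpha}y_j\right)(a/q)$ and $\left(I_{q,a^-}^{1-\alpha}p\,\CZ^{\alpha}y_j\right)(a)$, which is what the identification at $x=a$ requires. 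For arbitrary functions the concomitant at $a$ and $\W(y_1,y_2)(a)$ are simply different numbers; the identification holds here only because $y_1,y_2$ satisfy \eqref{EVP1} at the node $x=a$ with one and the same $\lambda$. Concretely, writing $F_j:=I_{q,a^-}^{1-\alpha}\left(p\,\CZ^{\alpha}y_j\right)$, the definition of the right-sided Riemann--Liouville derivative gives
\[
D_{q,a^-}^{\alpha}\left(p\,\CZ^{\alpha}y_j\right)(a)=\frac{F_j(a)-F_j(a/q)}{(1-q)a},
\]
so \eqref{EVP1} at $x=a$ yields $F_j(a)-F_j(a/q)=(1-q)a\bigl(\lambda w_{\alpha}(a)-r(a)\bigr)y_j(a)$, whence
\[
\bigl[y_1(a)F_2(a)-y_2(a)F_1(a)\bigr]-\bigl[y_1(a)F_2(a/q)-y_2(a)F_1(a/q)\bigr]=0,
\]
because the two correction terms cancel in the antisymmetric combination (this is where the equality of the two eigenvalues is used a second time). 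Some such argument is the missing piece; without it your proof stops at ``the concomitant at $a$ equals $\W(y_1,y_2)(0)$,'' which is not the assertion.

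For comparison, the paper's proof never meets this difficulty because it never evaluates anything at $a/q$: starting from the same pointwise identity $y_1\ML_{q,\alpha}y_2-y_2\ML_{q,\alpha}y_1=0$, it computes via the $q$-product rule that $D_q\W(y_1,y_2)(x)=D_qy_1(x)\,F_2(x)-D_qy_2(x)\,F_1(x)$ (equation \eqref{eq:5}), integrates this over $[0,a]$, and uses Lemma~\ref{Lem:1} (identity \eqref{I2}) to see that both resulting terms equal $\int_0^a p\,\CZ^{\alpha}y_1\,\CZ^{\alpha}y_2\,d_qx$, so the integral vanishes; the fundamental theorem of $q$-calculus then gives $\W(y_1,y_2)(a)-\W(y_1,y_2)(0)=0$. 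That route uses the equation only at the interior nodes $aq^{n+1}$ and involves only lattice values of $I_{q,a^-}^{1-\alpha}$, which matters: the value at $a/q$ is precisely the ambiguous object in this calculus (the Jackson integral from $qx$ to $a$ at $x=a/q$ is an empty integral), so an argument resting on \eqref{GI}, as yours does, inherits that ambiguity unless supplemented by the correction computed above.
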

\begin{proof}
Let $y_1$ and $y_2$ be two solutions of \eqref{EVP1}--\eqref{BC2}. Then
\bea\label{eq:1}
D_{q,a^-}^{\alpha}\left(p\CZ^{\alpha}\,y_1\right)(x)+r(x)y_1(x)=\lambda w_{\alpha}y_1(x),\\
\label{eq:2}
D_{q,a^-}^{\alpha}\left(p\CZ^{\alpha}\,y_2\right)(x)+r(x)y_2(x)=\lambda w_{\alpha}y_2(x),
\eea
for all $x\in A_{q,a}^*$.
Multiply \eqref{eq:1} by $y_2$ and \eqref{eq:2} by $y_1$ and subtracting  the two equations. This gives
\be\label{eq:3}
y_1(x)D_{q,a^-}^{\alpha}\left(p\CZ^{\alpha}\,y_2\right)(x)-y_2(x)D_{q,a^-}^{\alpha}\left(p\CZ^{\alpha}\,y_1\right)(x)=0.
\ee
using that $-\frac{1}{q}D_{q^{-1}}f(x)=D_{q,x}f(\frac{x}{q})$, \eqref{eq:3} can be written as
\be\label{eq:4}
y_1(x)D_{q,x}\left(I_{q,a^-}^{1-\alpha}p\CZ^{\alpha}\,y_2\right)(\frac{x}{q})-y_2(x)D_{q,x}\left(I_{q,a^-}^{1-\alpha}p\CZ^{\alpha}\,y_1\right)(\frac{x}{q})=0.
\ee
Hence,
\be\label{eq:5}
D_{q}\W(y_1,y_2)(x)=D_qy_1(x)\left(I_{q,a^-}^{1-\alpha}\,p\CZ^{\alpha}\,y_2\right)(x)-D_qy_2(x)\left(I_{q,a^-}^{1-\alpha}\,p\CZ^{\alpha}\,y_1\right)(x).
\ee
Thus from~\eqref{I2}
\[\begin{gathered}\int_{0}^{a}D_{q}\W(y_1,y_2)(x)\,d_qx\\
=\int_{0}^{a}\left[D_qy_1(x)I_{q,a^-}^{1-\alpha}\left(P\CZ^{\alpha}\,y_2\right)(x)-D_qy_2(x)I_{q,a^-}^{1-\alpha}\left(P\CZ^{\alpha}\,y_1\right)(x)\right]\,d_qx
=0.
\end{gathered}\]
Hence,
\[\W(y_1,y_2)(0)=\W(y_1,y_2)(a).\]
\end{proof}

We  have the following theorems:
\begin{thm}\label{Thm:Wronskian}
Let $y_1$ and $y_2$ be two  functions in $\mathcal{A}C_q(A_{q,a}^*)$.  Then
$y_1$ and $y_2$ are   two  linearly independent solutions of \eqref{EVP1} if and only if $\W(y_1,y_2)(0)\neq 0$.   \end{thm}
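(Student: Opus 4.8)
The plan is to prove the two directions of this equivalence separately, using the Wronskian identity $D_q W_{q,p,\alpha}(y_1,y_2)(x) = D_q y_1(x)\left(I_{q,a^-}^{1-\alpha}p\,{}^cD_{q,0^+}^{\alpha}y_2\right)(x) - D_q y_2(x)\left(I_{q,a^-}^{1-\alpha}p\,{}^cD_{q,0^+}^{\alpha}y_1\right)(x)$ established in the preceding theorem (equation \eqref{eq:5}), together with the constancy result $W_{q,p,\alpha}(y_1,y_2)(0)=W_{q,p,\alpha}(y_1,y_2)(a)$ for solutions. Throughout I treat $W_{q,p,\alpha}(y_1,y_2)$ as the $q$-fractional analogue of the classical Wronskian, so the heuristic is exactly that of the ordinary second-order theory: two solutions are dependent precisely when their Wronskian vanishes identically, and for solutions of the equation the Wronskian is forced to be constant in $x$ (on $A_{q,a}^*$), so vanishing at a single point is equivalent to vanishing everywhere.

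For the forward direction, assume $y_1$ and $y_2$ are linearly dependent solutions, say $y_2 = c\,y_1$ for some constant $c$ (the degenerate case where one is identically zero is handled trivially since then $W_{q,p,\alpha}\equiv 0$). Substituting $y_2 = c y_1$ into the defining formula for $W_{q,p,\alpha}(y_1,y_2)(x)$ and using linearity of $I_{q,a^-}^{1-\alpha}$ and of ${}^cD_{q,0^+}^{\alpha}$, the two terms cancel identically, so $W_{q,p,\alpha}(y_1,y_2)(x)=0$ for all $x$, in particular at $x=0$. This gives the contrapositive of the ``if'' direction: $W_{q,p,\alpha}(y_1,y_2)(0)\neq 0$ forces linear independence.

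For the converse I argue by contraposition: suppose $W_{q,p,\alpha}(y_1,y_2)(0)=0$ and show that $y_1,y_2$ are linearly dependent. Since $y_1,y_2$ solve \eqref{EVP1}, the identity \eqref{eq:5} shows that $D_q W_{q,p,\alpha}(y_1,y_2)(x)$ is a $2\times 2$ determinant whose rows are the ``solution data'' vectors $\bigl(D_q y_i(x),\ (I_{q,a^-}^{1-\alpha}p\,{}^cD_{q,0^+}^{\alpha}y_i)(x)\bigr)$. The strategy is to regard $y_1,y_2$ as determined by their initial data $\bigl(y_i(0),\ (I_{q,a^-}^{1-\alpha}p\,{}^cD_{q,0^+}^{\alpha}y_i)(0)\bigr)$ and to show that $W_{q,p,\alpha}(y_1,y_2)(0)$ is (up to a nonzero factor) exactly the determinant of the matrix of these initial data. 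When $W_{q,p,\alpha}(y_1,y_2)(0)=0$ this determinant vanishes, so there is a nontrivial constant vector $(\kappa_1,\kappa_2)$ annihilating it; then $z := \kappa_1 y_1 + \kappa_2 y_2$ is a solution of \eqref{EVP1} whose initial data $\bigl(z(0),\ (I_{q,a^-}^{1-\alpha}p\,{}^cD_{q,0^+}^{\alpha}z)(0)\bigr)$ both vanish. By the uniqueness result of Theorem~\ref{EUT:1-0} (applied to the homogeneous initial conditions \eqref{con:2}), such a solution must be identically zero on $A_{q,aq^{m_0}}^*$, yielding a nontrivial linear relation among $y_1,y_2$ and hence their linear dependence.

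The main obstacle is the converse, specifically the step identifying $W_{q,p,\alpha}(y_1,y_2)(0)$ with the determinant of initial data and then invoking uniqueness. Two technical points require care. First, one must verify that the expression $\bigl(I_{q,a^-}^{1-\alpha}p\,{}^cD_{q,0^+}^{\alpha}y_i\bigr)(x)$ evaluated at $x=0$ genuinely plays the role of the ``fractional derivative'' boundary datum appearing in \eqref{con:0}--\eqref{con:2}, so that vanishing of the Wronskian at $0$ translates correctly into vanishing initial data for the combination $z$; this is where the definitions of $W_{q,p,\alpha}$ and of the boundary operator in \eqref{BC1} must be matched. Second, the uniqueness theorem (Theorem~\ref{EUT:1-0}) is stated on the restricted set $A_{q,aq^{m_0}}^*$, so strictly the conclusion $z\equiv 0$ is obtained there; one should note that linear dependence of $y_1,y_2$ as elements of the solution space is nonetheless established, since a nontrivial relation on a $q$-geometric subset containing a neighborhood of zero propagates to the whole domain by the same uniqueness/continuation argument. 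I would flag this restriction explicitly rather than gloss over it.
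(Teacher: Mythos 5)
Your proposal is correct and takes essentially the same route as the paper: the forward direction is the same linear-algebra observation on the initial data $\bigl(y_i(0),\ (I_{q,a^-}^{1-\alpha}p\,{}^cD_{q,0^+}^{\alpha}y_i)(0)\bigr)$ (you merely state it contrapositively), and the converse forms the combination $z=\kappa_1y_1+\kappa_2y_2$ with vanishing initial data and invokes the uniqueness result of Theorem~\ref{EUT:1-0}, exactly as the paper does. Your explicit caveat that Theorem~\ref{EUT:1-0} only yields $z\equiv 0$ on the restricted set $A_{q,aq^{m_0}}^*$ flags a point the paper's own proof glosses over, so it is a welcome addition rather than a defect.
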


\begin{proof}
Let $y_1$ and $y_2$ be two solutions of \eqref{EVP1}  such that $\W(y_1,y_1)(0)\neq 0$. If
\be\label{1} k_1y_1(x)+k_2y_2(x)=0,\quad x\in A_{q,a}^*,\ee
then
\[
\begin{split}
k_1y_1(0)+k_2y_2(0)&=0,\\
k_1\left(I_{q,a^-}^{1-\alpha}p\CZ^{\alpha}\,y_1\right)(0)+k_2\left(I_{q,a^-}^{1-\alpha}p\CZ^{\alpha}\,y_2\right)(0)&=0,
\end{split}
\]
but $\W(y_1,y_2)(0)\neq 0$ implies that $k_1=k_2=0$. I.e. $y_1$ and $y_2$ are linearly independent.
To prove the necessary condition, we suppose on the contrary that $y_1$ and $y_2$ are linearly independent solutions and $\W(y_1,y_2)(0)=0$. Hence there exist constants $r_1$ and $r_2$ not both zeros such that
\[\begin{split}r_1y_1(0)+r_2y_2(0)&=0,\\
r_1\left(I_{q,a^-}^{1-\alpha}p\CZ^{\alpha}\,y_1\right)(0)+r_2\left(I_{q,a^-}^{1-\alpha}p\CZ^{\alpha}\,y_2\right)(0)&=0.
\end{split}\]
Set $y:=r_1y_1+r_2y_2$. Hence, $y$ is a solution  of \eqref{EVP1} satisfying the initial conditions
\be\label{BC7}y(0)=\left(I_{q,a^-}^{1-\alpha}p\CZ^{\alpha}\,y\right)(0).\ee
 According to Theorem~\ref{EUT:1-0}, there exists $m_0\in \mathbb{N}$ such that \eqref{EVP1} with \eqref{BC7} has a unique solution in $C(A_{q,aq^{m_0}}^*)$. Hence, $y\equiv 0$. I.e.
$y_1$ and $y_2$ are linearly dependent which is a contradiction. Hence, we should have $\W(y_1,y_2)(0)=0$.
\end{proof}

\begin{thm}\label{thm:GM}
 The geometric multiplicity of each eigenvalue of the qFSLP \eqref{EVP1}--\eqref{BC2} is 1. I.e. for each eigenvalue,  the associated eigenfunction is unique except for a constant multiplier.
\end{thm}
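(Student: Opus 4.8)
The plan is to reduce the assertion to Theorem~\ref{Thm:Wronskian} by showing that any two eigenfunctions associated with the same eigenvalue must have a vanishing $q,p,\alpha$ Wronskian at the origin. First I would let $y_1$ and $y_2$ be two eigenfunctions corresponding to a common eigenvalue $\lambda$. Both are nontrivial solutions of \eqref{EVP1} lying in the space $V\subseteq L_q^2(A_{q,a}^*)\cap C(A_{q,a}^*)$, and in particular both satisfy the boundary condition \eqref{BC1} at $x=0$.

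The key step is to exploit \eqref{BC1}. Writing that condition out for $y_1$ and for $y_2$ yields the homogeneous linear system
\[
\begin{pmatrix}
y_1(0) & \left(I_{q,a^-}^{1-\alpha}\,p\CZ^{\alpha}y_1\right)(0)\\
y_2(0) & \left(I_{q,a^-}^{1-\alpha}\,p\CZ^{\alpha}y_2\right)(0)
\end{pmatrix}
\begin{pmatrix} c_1\\ c_2 \end{pmatrix}
=\begin{pmatrix} 0\\ 0 \end{pmatrix}
\]
in the unknowns $c_1,c_2$. Since $c_1^2+c_2^2\neq 0$, this system possesses a nontrivial solution, so its coefficient determinant must vanish. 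But that determinant is exactly $\W(y_1,y_2)(0)$, by the definition of the Wronskian; hence $\W(y_1,y_2)(0)=0$.

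Finally, I would invoke Theorem~\ref{Thm:Wronskian}: because $\W(y_1,y_2)(0)=0$, the solutions $y_1$ and $y_2$ cannot be linearly independent, so they are linearly dependent. Consequently the eigenspace associated with $\lambda$ is one-dimensional, which is precisely the claim that the geometric multiplicity of every eigenvalue equals $1$. I expect the only delicate point to be checking that the eigenfunctions satisfy the regularity hypothesis of Theorem~\ref{Thm:Wronskian}, namely membership in $\mathcal{A}C_q(A_{q,a}^*)$; this should follow from the fact that an eigenfunction solves the integral equation \eqref{Evp2} together with the mapping properties of the fractional $q$-operators recorded in Section 2, so I do not anticipate a genuine obstacle there.
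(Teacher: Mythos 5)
Your proposal is correct and is essentially identical to the paper's own proof: both write the boundary condition \eqref{BC1} for $y_1$ and $y_2$ as a homogeneous linear system in $(c_1,c_2)$, observe that $c_1^2+c_2^2\neq 0$ forces the coefficient determinant --- which is exactly $\W(y_1,y_2)(0)$ --- to vanish, and then conclude linear dependence from Theorem~\ref{Thm:Wronskian}. Your closing remark about verifying the regularity hypothesis $y_i\in\mathcal{A}C_q(A_{q,a}^*)$ is a point the paper passes over silently, but it does not change the argument.
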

\begin{proof}
Assume that $y_1$ and $y_2$ are two solutions of \eqref{EVP1}-\eqref{BC2}. Then
\[\begin{split}
c_1y_1(0)+c_2I_{q,a^-}^{1-\alpha}(p\CZ^{\alpha} y_1)(0)&=0,\\
c_1y_2(0)+c_2I_{q,a^-}^{1-\alpha}(p\CZ^{\alpha} y_2)(0)&=0.
\end{split}
\]
Since $c_1$ and $c_2$ are not both zeros, then
\[y_1(0)I_{q,a^-}^{1-\alpha}(p\CZ^{\alpha} y_2)(0)-y_2(0)I_{q,a^-}^{1-\alpha}(p\CZ^{\alpha} y_1)(0)=\W(y_1,y_2)(0)=0.\]
Consequently, from Theorem~\ref{Thm:Wronskian}, $y_1$ and $y_2$ are linearly dependent. I.e. $y_2$ is a constant multiplier of $y_1$.
\end{proof}

\section{ A discrete spectrum of a  qFSLP }
In this section,  we solve  the fractional $q$-Sturm--Liouville problem
\be\label{EX1} D_{q,a^-}^{\mu} (qx;q)_{\beta+\mu}{}^cD_{q,0^+}^{\mu}y(x)=\lambda x^{-\mu}(qx;q)_{\beta} y(x),\; 0<\mu<1,\,x\in A_{q}^*,\ee
under the boundary conditions
\be\label{BC5}y(0)=\left[I_{q,1^-}^{1-\mu}(q^{\beta+1}x;q)_{\mu}{}^cD_{q,0^+}^{\mu} y(x)\right](\frac{1}{q})=0 .\ee
We  show that it  has a discrete  spectrum  $\set{\phi_n,\lambda_n} $ where $\phi_n$  is a   little $q$-Jacobi polynomial and the eigenvalues $\set{\lambda_n}$
have  no finite limit points.  To achieve our goal, we need the following preliminaries.
The little $q$-Jacobi polynomial is defined by, see~\cite[P. 92]{KS}
 \[
 p_n(x;q^{\alpha},q^{\beta}|q)={}_2\phi_1\left(q^{-n},q^{\alpha+\beta+n+1};q^{\alpha+1};q,qx\right),\;\alpha>-1,\,\beta>-1.
 \]
 It satisfies the orthogonality relation
 \be\label{ORJ}
 \int_{0}^{1}w_{\alpha,\beta}p_m(t;q^{\alpha},q^{\beta}|q)p_n(t;q^{\alpha},q^{\beta}|q)\,d_qt=C_n(\alpha,\beta)\,\delta_{n,m},
 \ee
 where
 \[w_{\alpha,\beta}(t):=t^{\alpha}(qt;q)_{\beta},\;\alpha>-1,\,\beta>-1,\]
 and
 \[\begin{gathered}
 C_{n}(\alpha,\beta)\\:=q^{(\alpha+1)n}\dfrac{(1-q)(1-q^{\alpha+\beta+1})}{(1-q^{\alpha+\beta+1+2n})}\dfrac{(q,q^{\alpha+\beta+2};q)_{\infty}}{(q^{\alpha+1},q^{\beta+1};q)_{\infty}}\dfrac{(q,q^{\beta+1};q)_n}{(q^{\alpha+1},q^{\alpha+\beta+1};q)_n}.
  \end{gathered}\]
 \begin{lem}\label{lem:2}
  \[
  I_{q,0^+}^{\mu}\left((\cdot)^{\alpha}p_n(\cdot;q^{\alpha},q^{\beta}|q)\right)(x)=\dfrac{\Gamma_q(\alpha+1)}{\Gamma_q(\mu+\alpha+1)}x^{\alpha+\mu}p_n(x;q^{\alpha+\mu},q^{\beta-\mu}|q).
  \]

 \end{lem}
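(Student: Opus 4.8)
The plan is to compute the left-sided Riemann--Liouville $q$-integral of $x^{\alpha}p_n(x;q^{\alpha},q^{\beta}|q)$ directly by expanding the little $q$-Jacobi polynomial as a terminating ${}_2\phi_1$ series and integrating term by term. Writing
\[
p_n(x;q^{\alpha},q^{\beta}|q)=\sum_{k=0}^{n}\frac{(q^{-n};q)_k(q^{\alpha+\beta+n+1};q)_k}{(q^{\alpha+1};q)_k(q;q)_k}q^k x^k,
\]
the integrand becomes a finite linear combination of the monomials $x^{\alpha+k}$, so that $I_{q,0^+}^{\mu}$ acts on each power separately. The essential computational input is the action of the fractional $q$-integral on a pure power, namely the power rule that can be read off from the worked example in Section~2 (the evaluation of $I_{q,b^-}^{\alpha}b^{\mu}(qx/b;q)_{\mu}$ and its left-sided analogue via the semigroup and the little $q$-Jacobi weight identities); explicitly I would use
\[
I_{q,0^+}^{\mu}\bigl((\cdot)^{\alpha+k}\bigr)(x)=\frac{\Gamma_q(\alpha+k+1)}{\Gamma_q(\alpha+\mu+k+1)}x^{\alpha+\mu+k}.
\]

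First I would substitute this power rule into the term-by-term integral, pulling out the common factor $x^{\alpha+\mu}$ and the constant $\Gamma_q(\alpha+1)/\Gamma_q(\alpha+\mu+1)$. After factoring these out, the remaining sum over $k$ has coefficients
\[
\frac{(q^{-n};q)_k(q^{\alpha+\beta+n+1};q)_k}{(q^{\alpha+1};q)_k(q;q)_k}\,q^k\cdot
\frac{\Gamma_q(\alpha+k+1)\Gamma_q(\alpha+\mu+1)}{\Gamma_q(\alpha+1)\Gamma_q(\alpha+\mu+k+1)}\, x^k.
\]
The $q$-gamma quotients simplify, using $\Gamma_q(\alpha+k+1)/\Gamma_q(\alpha+1)=(q^{\alpha+1};q)_k$ and $\Gamma_q(\alpha+\mu+1)/\Gamma_q(\alpha+\mu+k+1)=1/(q^{\alpha+\mu+1};q)_k$, so the factor $(q^{\alpha+1};q)_k$ cancels against the denominator already present. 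Then I would read the resulting series as
\[
{}_2\phi_1\bigl(q^{-n},q^{\alpha+\beta+n+1};q^{\alpha+\mu+1};q,qx\bigr),
\]
which is exactly $p_n(x;q^{\alpha+\mu},q^{\beta-\mu}|q)$ once I verify that the parameters match: the lower parameter $q^{\alpha+\mu+1}$ corresponds to replacing $\alpha$ by $\alpha+\mu$, and the numerator parameter $q^{\alpha+\beta+n+1}=q^{(\alpha+\mu)+(\beta-\mu)+n+1}$ shows $\beta$ has been replaced by $\beta-\mu$, as claimed.

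The main obstacle is bookkeeping rather than conceptual: one must confirm that the power rule for $I_{q,0^+}^{\mu}$ on $x^{\alpha+k}$ holds uniformly for all $k$ in the admissible range and that the resulting hypergeometric parameters assemble into a \emph{bona fide} little $q$-Jacobi polynomial with shifted parameters $(q^{\alpha+\mu},q^{\beta-\mu})$. I would also keep an eye on the convergence/validity window $|qx|<1$ inherited from the ${}_2\phi_1$ representation and on the constraint $\beta-\mu>-1$ needed for the shifted polynomial to make sense in the later orthogonality application; since the series is terminating at $k=n$, convergence is automatic and the identity holds as a polynomial identity in $x$. The telescoping of the $q$-gamma factors is the one place where a sign or index slip could creep in, so I would double-check the cancellation of $(q^{\alpha+1};q)_k$ explicitly before declaring the series a ${}_2\phi_1$.
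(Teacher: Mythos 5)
Your proposal is correct and is essentially the paper's own argument: the paper proves the equivalent integral representation $p_n(x;q^{\alpha+\mu},q^{\beta-\mu}|q)=\frac{\Gamma_q(\mu+\alpha+1)}{\Gamma_q(\mu)\Gamma_q(\alpha+1)}\int_0^1 y^{\alpha}(qy;q)_{\mu-1}p_n(xy;q^{\alpha},q^{\beta}|q)\,d_qy$ by exactly your term-by-term computation (series representation of the ${}_2\phi_1$ plus the $q$-beta integral, which is the power rule in disguise) and then substitutes $t=xy$ to recognize $I_{q,0^+}^{\mu}$. The only difference is organizational: you apply the power rule directly inside the fractional integral rather than passing through the intermediate Bateman-type representation, and your parameter check (invariance of $\alpha+\beta$ under the shift $(\alpha,\beta)\mapsto(\alpha+\mu,\beta-\mu)$) is the same matching the paper uses implicitly.
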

\begin{proof}
 Using the $q$-beta integral and the series representation of the little $q$-Jacobi polynomial, we can prove
 \be\label{lqj2}
 p_n(x;q^{\alpha+\mu},q^{\beta-\mu}|q)=\dfrac{\Gamma_q(\mu+\alpha+1)}{\Gamma_q(\mu)\Gamma_q(\alpha+1)}\int_{0}^1y^{\alpha}(qy;q)_{\mu-1}p_n(xy;q^{\alpha},q^{\beta}|q)\,d_qy.
 \ee
 Make the substitution
 $t=xy$ on the $q$-integral of the right hand side of \eqref{lqj2}, we obtain

 \be\label{lqj3}
  p_n(x;q^{\alpha+\mu},q^{\beta-\mu}|q)=\dfrac{x^{-\alpha-1}\Gamma_q(\mu+\alpha+1)}{\Gamma_q(\mu)\Gamma_q(\alpha+1)}\int_{0}^{x}t^{\alpha}(qt/x;q)_{\mu-1}p_n(t;q^{\alpha},q^{\beta}|q)\,d_qt.
  \ee

\end{proof}
\begin{cor}\label{cor:1}
\[\begin{gathered}
{}^cD_{q,0^+}^{\mu}\left(x^{\alpha+\mu}p_n(x;q^{\alpha+\mu},q^{\beta-\mu}|q)\right)\\=
\left\{\begin{array}{cc}
\dfrac{\Gamma_q(\mu+\alpha+1)} {\Gamma_q(\alpha+1)} x^{\alpha}p_n(x;q^{\alpha},q^{\beta}|q),&\alpha>-\mu\\&\\
\dfrac{1} {\Gamma_q(1-\mu)} x^{-\mu}\left[p_n(x;q^{\alpha},q^{\beta}|q)-1\right],&\alpha=-\mu.
\end{array}\right.
  \end{gathered}\]
\end{cor}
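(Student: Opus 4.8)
The plan is to deduce both cases from Lemma~\ref{lem:2} together with the composition rule \eqref{CR3} for $\CZ^{\mu}$ acting on a left Riemann--Liouville $q$-integral. First I set $g(t):=t^{\alpha}p_n(t;q^{\alpha},q^{\beta}|q)$. Since $\alpha>-1$ in both cases, $g\in L_q^1(A_{q,a}^*)$, and Lemma~\ref{lem:2} gives $I_{q,0^+}^{\mu}g(x)=\frac{\Gamma_q(\alpha+1)}{\Gamma_q(\mu+\alpha+1)}x^{\alpha+\mu}p_n(x;q^{\alpha+\mu},q^{\beta-\mu}|q)$. Hence the function whose Caputo derivative is sought is exactly $x^{\alpha+\mu}p_n(x;q^{\alpha+\mu},q^{\beta-\mu}|q)=\frac{\Gamma_q(\mu+\alpha+1)}{\Gamma_q(\alpha+1)}I_{q,0^+}^{\mu}g(x)$, so the whole computation reduces to evaluating $\CZ^{\mu}I_{q,0^+}^{\mu}g$ and then multiplying by the constant $\frac{\Gamma_q(\mu+\alpha+1)}{\Gamma_q(\alpha+1)}$.

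Next I would verify the hypotheses of \eqref{CR3}: we already have $g\in L_q^1(A_{q,a}^*)$, and from the displayed closed form $I_{q,0^+}^{\mu}g$ is a constant multiple of $x^{\alpha+\mu}$ times a polynomial, which is $q$-regular at zero with its $q$-derivative in $L_q^1(A_{q,a}^*)$ whenever $\alpha+\mu\geq 0$; thus $I_{q,0^+}^{\mu}g\in\mathcal{A}C_q(A_{q,a}^*)$. Applying \eqref{CR3} then yields $\CZ^{\mu}I_{q,0^+}^{\mu}g(x)=g(x)-\frac{(I_{q,0^+}^{\mu}g)(0)}{\Gamma_q(1-\mu)}x^{-\mu}$, so that the target Caputo derivative equals $\frac{\Gamma_q(\mu+\alpha+1)}{\Gamma_q(\alpha+1)}\left[g(x)-\frac{(I_{q,0^+}^{\mu}g)(0)}{\Gamma_q(1-\mu)}x^{-\mu}\right]$.

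The remaining step — and the only place where the two cases separate — is the evaluation of the correction term $(I_{q,0^+}^{\mu}g)(0)$. Using the closed form from Lemma~\ref{lem:2} and the normalization $p_n(0;q^{\alpha+\mu},q^{\beta-\mu}|q)=1$, this value is the limit as $x\to 0$ of $\frac{\Gamma_q(\alpha+1)}{\Gamma_q(\mu+\alpha+1)}x^{\alpha+\mu}p_n(x;q^{\alpha+\mu},q^{\beta-\mu}|q)$. When $\alpha>-\mu$ we have $\alpha+\mu>0$, so the limit is $0$, the correction term drops out, and the Caputo derivative equals $\frac{\Gamma_q(\mu+\alpha+1)}{\Gamma_q(\alpha+1)}x^{\alpha}p_n(x;q^{\alpha},q^{\beta}|q)$, which is the first case. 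When $\alpha=-\mu$ the power $x^{\alpha+\mu}=x^{0}$ survives, so $(I_{q,0^+}^{\mu}g)(0)=\frac{\Gamma_q(1-\mu)}{\Gamma_q(1)}=\Gamma_q(1-\mu)$; substituting this together with the identity $\frac{\Gamma_q(\mu+\alpha+1)}{\Gamma_q(\alpha+1)}=\frac{1}{\Gamma_q(1-\mu)}$ (valid at $\alpha=-\mu$) produces $\frac{1}{\Gamma_q(1-\mu)}x^{-\mu}\left[p_n(x;q^{-\mu},q^{\beta}|q)-1\right]$, which is the second case.

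The main obstacle I anticipate is precisely the bookkeeping of this correction term. One cannot invoke the simpler identity \eqref{CR3-} uniformly, since $g$ is unbounded near zero whenever $\alpha<0$; the full rule \eqref{CR3} is needed, and the vanishing of $(I_{q,0^+}^{\mu}g)(0)$ hinges on the strict inequality $\alpha+\mu>0$. The borderline value $\alpha=-\mu$ is exactly what keeps the term $x^{\alpha+\mu}$ from decaying, and it is this surviving term that, after the constant simplifies to $\frac{1}{\Gamma_q(1-\mu)}$, generates both the $x^{-\mu}$ factor and the subtracted $1$ appearing in the second branch.
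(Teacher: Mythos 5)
Your proposal is correct and takes essentially the same route as the paper: the paper's proof is the single sentence that the result follows from \eqref{CR3} and Lemma~\ref{lem:2}, and your argument supplies exactly those details, namely writing $x^{\alpha+\mu}p_n(x;q^{\alpha+\mu},q^{\beta-\mu}|q)$ as $\frac{\Gamma_q(\mu+\alpha+1)}{\Gamma_q(\alpha+1)}I_{q,0^+}^{\mu}g(x)$ with $g(t)=t^{\alpha}p_n(t;q^{\alpha},q^{\beta}|q)$, applying \eqref{CR3}, and checking that the correction term $\bigl(I_{q,0^+}^{\mu}g\bigr)(0)$ vanishes when $\alpha+\mu>0$ and equals $\Gamma_q(1-\mu)$ when $\alpha=-\mu$. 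Your verification of the hypotheses of \eqref{CR3} and your remark on why \eqref{CR3-} cannot be used uniformly are sound additions to the paper's terse proof.
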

\begin{proof}
The proof follows form \eqref{CR3} and Lemma \ref{lem:2}.
\end{proof}

  \begin{lem}\label{Lem:3}If $\alpha,\beta$, and $\mu$ are real numbers satisfying
  \[\alpha>-1,\;\beta>-1, \;\beta-1<\mu<\alpha+1\]
  \[
  \begin{gathered}
I_{q,{1}^-}^\mu\left((qt;q)_{\beta}p_m(t; q^\alpha,q^\beta|q  )\right)=\\ \Scale[.9]{q^{m\mu}\dfrac{\Gamma_q(\beta+m+1)\Gamma_q(\alpha-\mu+1+m)\Gamma_q(\alpha+1)}{\Gamma_q(\mu+\beta+m+1)\Gamma_q(\alpha+m+1)\Gamma_q(\alpha-\mu+1)}(qt;q)_{\beta+\mu}p_m(t;q^{\alpha-\mu},q^{\beta+\mu}|q)}.
  \end{gathered}
 \]
  \end{lem}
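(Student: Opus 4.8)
The plan is to mirror the proof of Lemma~\ref{lem:2}, interchanging the roles of the left- and right-sided operators. The computational engine is the single evaluation extracted from the worked example of Section~2 at $b=1$, namely
\be\label{prop:bb}
I_{q,1^-}^{\mu}\left[(qt;q)_{\nu}\right](x)=\frac{\Gamma_q(\nu+1)}{\Gamma_q(\nu+\mu+1)}\,(qx;q)_{\nu+\mu},\qquad \nu>-1,
\ee
which plays for $I_{q,1^-}^{\mu}$ the role that $I_{q,0^+}^{\mu}[t^{\nu}]=\frac{\Gamma_q(\nu+1)}{\Gamma_q(\nu+\mu+1)}x^{\nu+\mu}$ plays for the left-sided operator. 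As a first sanity check, for $m=0$ the little $q$-Jacobi polynomial is $1$ and \eqref{prop:bb} with $\nu=\beta$ gives the claimed identity with constant $\Gamma_q(\beta+1)/\Gamma_q(\beta+\mu+1)$, which is exactly the value of the stated prefactor at $m=0$; this fixes the normalisation and points to the general strategy.

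The key simplifying device is a Heine transformation. Applying ${}_2\phi_1(a,b;c;q,z)=\frac{(abz/c;q)_{\infty}}{(z;q)_{\infty}}{}_2\phi_1(c/a,c/b;c;q,abz/c)$ to the defining series of $p_m$ (with $a=q^{-m}$, $b=q^{\alpha+\beta+m+1}$, $c=q^{\alpha+1}$, $z=qt$) collapses the weight into the prefactor and yields the clean identity
\be\label{prop:heine}
(qt;q)_{\beta}\,p_m(t;q^{\alpha},q^{\beta}|q)={}_2\phi_1\!\left(q^{\alpha+m+1},q^{-\beta-m};q^{\alpha+1};q,q^{\beta+1}t\right),
\ee
and the same transformation applied with $\alpha\mapsto\alpha-\mu$, $\beta\mapsto\beta+\mu$ rewrites the target $(qx;q)_{\beta+\mu}p_m(x;q^{\alpha-\mu},q^{\beta+\mu}|q)$ as ${}_2\phi_1(q^{\alpha-\mu+m+1},q^{-\beta-\mu-m};q^{\alpha-\mu+1};q,q^{\beta+\mu+1}x)$. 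Thus the lemma is equivalent to computing $I_{q,1^-}^{\mu}$ of the right-hand side of \eqref{prop:heine} and identifying it, up to the constant $K$, with this second hypergeometric series.

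Since the left-hand side of \eqref{prop:heine} is $(qt;q)_{\beta}$ times a polynomial of degree $m$, it admits a finite expansion $(qt;q)_{\beta}p_m(t;q^{\alpha},q^{\beta}|q)=\sum_{j=0}^{m}b_j(qt;q)_{\beta+j}$ in the $q$-shifted factorial basis adapted to \eqref{prop:bb}. Applying $I_{q,1^-}^{\mu}$ term by term through \eqref{prop:bb} (a finite sum, so no convergence question arises) produces
\be\label{prop:afterbb}
I_{q,1^-}^{\mu}\left[(qt;q)_{\beta}\,p_m\right](x)=(qx;q)_{\beta+\mu}\sum_{j=0}^{m}b_j\,\frac{\Gamma_q(\beta+j+1)}{\Gamma_q(\beta+\mu+j+1)}\,(q^{\beta+\mu+1}x;q)_j,
\ee
so that the prefactor $(qx;q)_{\beta+\mu}$ separates automatically. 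Comparing with the expansion $p_m(x;q^{\alpha-\mu},q^{\beta+\mu}|q)=\sum_j b_j'(q^{\beta+\mu+1}x;q)_j$ in the same basis, and using $\Gamma_q(\beta+j+1)/\Gamma_q(\beta+\mu+j+1)=\big(\Gamma_q(\beta+1)/\Gamma_q(\beta+\mu+1)\big)(q^{\beta+1};q)_j/(q^{\beta+\mu+1};q)_j$, reduces the lemma to checking that $b_j/b_j'$ is proportional to $(q^{\beta+\mu+1};q)_j/(q^{\beta+1};q)_j$; the resulting $j$-independent constant is the asserted $K$, the factor $q^{m\mu}$ arising from the $q$-powers carried by the connection coefficients.

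The main obstacle is exactly this last step: evaluating the connection coefficients $b_j,b_j'$ in closed form and collapsing the sum in \eqref{prop:afterbb} back to a single little $q$-Jacobi polynomial. I expect this to rest on a terminating $q$-Chu--Vandermonde summation, i.e.\ a ${}_2\phi_1$ evaluated at argument $q$, precisely as in the worked example of Section~2 and in the derivation of \eqref{lqj2}; the low-order cases ($m=0,1$) can be checked by hand to pin down the pattern. Finally, the hypotheses $\beta>-1$ and $\beta-1<\mu<\alpha+1$ are used exactly to keep every $q$-Gamma factor in $K$ finite and to guarantee $\alpha-\mu>-1$ and $\beta+\mu>-1$, so that the shifted polynomial $p_m(\cdot;q^{\alpha-\mu},q^{\beta+\mu}|q)$ and the evaluation \eqref{prop:bb} with $\nu=\beta+j$ both remain legitimate.
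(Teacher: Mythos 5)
Your reduction is sound as far as it goes: the evaluation $I_{q,1^-}^{\mu}\left[(qt;q)_{\nu}\right](x)=\frac{\Gamma_q(\nu+1)}{\Gamma_q(\nu+\mu+1)}(qx;q)_{\nu+\mu}$ is indeed the Section~2 worked example at $b=1$; the finite expansion $p_m(t;q^{\alpha},q^{\beta}|q)=\sum_{j=0}^{m}b_j(q^{\beta+1}t;q)_j$ exists because $\{(q^{\beta+1}t;q)_j\}_{j\leq m}$ is a basis of the polynomials of degree $\leq m$; term-by-term application is legitimate for a finite sum; and comparing coefficients in the linearly independent family $\{(q^{\beta+\mu+1}x;q)_j\}$ correctly reduces the lemma to showing $b_j/b_j'\propto (q^{\beta+\mu+1};q)_j/(q^{\beta+1};q)_j$. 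But your argument stops exactly at the step that \emph{is} the lemma: you never produce closed forms for $b_j$ and $b_j'$, never prove the claimed proportionality, and never extract the constant $q^{m\mu}\,\Gamma_q(\beta+m+1)\Gamma_q(\alpha-\mu+1+m)\Gamma_q(\alpha+1)/\left(\Gamma_q(\mu+\beta+m+1)\Gamma_q(\alpha+m+1)\Gamma_q(\alpha-\mu+1)\right)$; you only ``expect'' a terminating $q$-Chu--Vandermonde to do it and check $m=0,1$. All of the content of the identity --- the parameter shift $(\alpha,\beta)\mapsto(\alpha-\mu,\beta+\mu)$, the factor $q^{m\mu}$, and the $\Gamma_q$ quotient --- lives precisely in that omitted computation, so what you have is a correct reformulation plus a conjecture, not a proof. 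A further remark: your Heine transformation, while a correct instance of the standard identity, yields an expansion of $(qt;q)_{\beta}p_m(t;q^{\alpha},q^{\beta}|q)$ in \emph{monomials} $t^k$ (a non-terminating series), not in the shifted-factorial basis $(qt;q)_{\beta+j}$ that your computation actually requires; as written it supplies none of the $b_j$ and plays no operative role.

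For comparison, the paper closes this exact gap with no connection-coefficient work at all: it substitutes the left-sided result of Lemma~\ref{lem:2} (equation \eqref{lqj3} with $\alpha\mapsto\alpha-\mu$, $\beta\mapsto\beta+\mu$) into the orthogonality relation \eqref{ORJ}, interchanges the order of the resulting double $q$-integral (the adjointness of Lemma~\ref{Lem:1}), and observes that the function $F_m(t)=(qt;q)_{\beta+\mu}^{-1}\int_{qt}^{1}u^{\mu-1}(qu;q)_{\beta}(qt/u;q)_{\mu-1}p_m(u;q^{\alpha},q^{\beta}|q)\,d_qu$, which is $\Gamma_q(\mu)$ times the quantity you want divided by $(qt;q)_{\beta+\mu}$, is orthogonal in the weighted space $L_q^2\left(A_q^*,w_{\alpha-\mu,\beta+\mu}\right)$ to every $p_n(\cdot;q^{\alpha-\mu},q^{\beta+\mu}|q)$ with $n\neq m$. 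Completeness of the little $q$-Jacobi system then forces $F_m$ to be a constant multiple of $p_m(\cdot;q^{\alpha-\mu},q^{\beta+\mu}|q)$, and the constant --- hence the $q^{m\mu}$ and all $\Gamma_q$ factors --- falls out of the ratio $C_m(\alpha,\beta)/C_m(\alpha-\mu,\beta+\mu)$ of normalization constants, with no series manipulation. If you wish to keep your direct route, you must actually carry out the missing step: derive a closed form for the connection coefficients $b_j$ (a terminating ${}_3\phi_2$ or $q$-Vandermonde evaluation), do the same for $b_j'$, and verify the ratio; until that is done the proof is incomplete.
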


  \begin{proof}
  In \eqref{lqj3} replace $\alpha$ by $\alpha-\mu$, $\beta$ by $\beta+\mu$ and then substitute in the orthogonality relation \eqref{ORJ}. This gives
  \be\label{lqj4}\begin{gathered}
  C_{n}(\alpha,\beta)\delta_{n,m}= \dfrac{
  \Gamma_q(\alpha+1)}{\Gamma_q(\mu)\Gamma_q(\alpha-\mu+1)}\times\\
  \int_{0}^{1}t^{\mu-1}(qt;q)_{\beta}p_n(t;q^{\alpha},q^\beta|q)\int_{0}^{t}u^{\alpha-\mu}(qu/t;q)_{\mu-1}p_m(u;q^{\alpha-\mu},q^{\beta+\mu}|q)\,d_qu\,d_qt.
  \end{gathered}
  \ee
Changing the order of the $q$-integration gives
  \[\begin{gathered}
  C_{n}(\alpha,\beta)\delta_{n,m}=\\\dfrac{
  \Gamma_q(\alpha+1)}{\Gamma_q(\mu)\Gamma_q(\alpha-\mu+1)}\int_{0}^{1}t^{\alpha-\mu}(qt;q)_{\beta+\mu}p_n(t;q^{\alpha-\mu},q^{\beta+\mu}|q)F_m(t)\,d_qt,
  \end{gathered}
  \]
  where
  \[F_m(t)=\frac{1}{(qt;q)_{\beta+\mu}}\int_{qt}^{1}u^{\mu-1}(qu;q)_{\beta}(qt/u;q)_{\mu-1}p_m(u;q^{\alpha},q^{\beta}|q))\,d_qu.\]
  Set \[G_m(t):=F_m(t)-\frac{\Gamma_q(\mu)\Gamma_q(\alpha-\mu+1)}{\Gamma_q(\alpha+1)}p_m(t;q^{\alpha-\mu},q^{\beta+\mu}|q)\]
  Therefore,
  \[
  \int_{0}^{1}w_{\alpha,\beta}(u)p_n(u;q^{\alpha-\mu},q^{\beta+\mu}|q)G_m(u)\,d_qu=0\]

  for all $n\in \mathbb{N}$. From the completeness of the little $q$-Jacobi polynomials $p_n(x;q^{\alpha},q^{\beta}|q)$ in the weighted space $L_q^2\left(A_q^*, w_{\alpha,\beta}\right)$, $w_{\alpha,\beta}(x)=x^{\alpha}(qx;q)_{\beta}$ we obtain
\[ F_m(u)=\dfrac{\Gamma_q(\mu)\Gamma_q(\alpha-\mu+1)}{\Gamma_q(\alpha+1)}\dfrac{c_m(\alpha,\beta)}{c_m(\alpha-\mu,\beta+\mu)}p_m(u;q^{\alpha-\mu},q^{\beta+\mu}|q).\]
  \end{proof}

\begin{cor}\label{cor:2}
  \[ \begin{gathered}D_{q,1^-}^{\mu}\left((qt;q)_{\beta+\mu}p_m(t;q^{\alpha-\mu},q^{\beta+\mu}|q)\right)\\
  =\Scale[.95]{q^{-m\mu}\frac{\Gamma_q(\mu+\beta+m+1)\Gamma_q(\alpha+m+1)\Gamma_q(\alpha-\mu+1)}{\Gamma_q(\beta+m+1)\Gamma_q(\alpha-\mu+1+m)\Gamma_q(\alpha+1)}
  (qt;q)_{\beta}p_m(t;q^{\alpha},q^{\beta}|q)}.
  \end{gathered}
  \]
\end{cor}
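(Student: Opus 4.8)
The plan is to read Corollary~\ref{cor:2} as the exact inverse of the integral identity established in Lemma~\ref{Lem:3}. Write $f(t):=(qt;q)_{\beta}p_m(t;q^{\alpha},q^{\beta}|q)$ and $g(t):=(qt;q)_{\beta+\mu}p_m(t;q^{\alpha-\mu},q^{\beta+\mu}|q)$, and let $K$ denote the explicit constant $K:=q^{m\mu}\Gamma_q(\beta+m+1)\Gamma_q(\alpha-\mu+1+m)\Gamma_q(\alpha+1)/(\Gamma_q(\mu+\beta+m+1)\Gamma_q(\alpha+m+1)\Gamma_q(\alpha-\mu+1))$. Then Lemma~\ref{Lem:3} asserts precisely that $I_{q,1^-}^{\mu}f = K\,g$ on $A_q^*$. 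The first step is therefore to apply the right-sided Riemann--Liouville fractional $q$-derivative $D_{q,1^-}^{\mu}$ to both sides of this identity.

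The key step is the inversion formula~\eqref{CR2}, namely $D_{q,a^-}^{\alpha}I_{q,a^-}^{\alpha}f=f$, which the lemma preceding the corollary establishes for \emph{any} function defined on $A_{q,a}^*$; here I take $a=1$ and order $\mu$. Applying $D_{q,1^-}^{\mu}$ to the left-hand side and invoking~\eqref{CR2} collapses the composition $D_{q,1^-}^{\mu}I_{q,1^-}^{\mu}f$ to $f$ itself, while applying it to the right-hand side and using linearity of $D_{q,1^-}^{\mu}$ yields $K\,D_{q,1^-}^{\mu}g$. Equating the two sides gives $f = K\,D_{q,1^-}^{\mu}g$, hence $D_{q,1^-}^{\mu}g = K^{-1}f$. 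It then only remains to observe that reciprocating $K$ inverts the power of $q$ and interchanges numerator and denominator of the $\Gamma_q$-quotient, so that $K^{-1} = q^{-m\mu}\Gamma_q(\mu+\beta+m+1)\Gamma_q(\alpha+m+1)\Gamma_q(\alpha-\mu+1)/(\Gamma_q(\beta+m+1)\Gamma_q(\alpha-\mu+1+m)\Gamma_q(\alpha+1))$, which is exactly the constant appearing in the statement.

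Before invoking~\eqref{CR2} I would confirm that its hypotheses hold, but this is immediate: \eqref{CR2} is valid for arbitrary functions on $A_q^*$, and $g$---a little $q$-Jacobi polynomial multiplied by the $q$-shifted factorial $(qt;q)_{\beta+\mu}$---is certainly defined there, so no integrability or $q$-absolute-continuity condition needs to be checked. I expect no genuine obstacle; the only points requiring care are the bookkeeping of the parameter substitution (the interchange $\alpha\leftrightarrow\alpha-\mu$ and $\beta\leftrightarrow\beta+\mu$ relating Lemma~\ref{Lem:3} to the corollary) and checking that the constraints $\alpha>-1$, $\beta>-1$, $\beta-1<\mu<\alpha+1$ inherited from Lemma~\ref{Lem:3} keep every $\Gamma_q$-argument positive, so that $K$ is finite and nonzero and division by $K$ is legitimate.
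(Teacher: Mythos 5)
Your proposal is correct and is essentially the paper's own argument: apply $D_{q,1^-}^{\mu}$ to the identity of Lemma~\ref{Lem:3} and collapse $D_{q,1^-}^{\mu}I_{q,1^-}^{\mu}$ via the composition rule \eqref{CR2} (valid for arbitrary functions on $A_{q,1}^*$, so no regularity checks are needed), then invert the constant. Note that the paper's one-line proof cites \eqref{CR1} and Lemma~\ref{lem:2}, but these are evidently misprints for \eqref{CR2} and Lemma~\ref{Lem:3} --- the left-sided versions cited there cannot produce a right-sided derivative identity --- so your references are in fact the correct ones.
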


\begin{proof}
The proof follows form \eqref{CR1} and Lemma \ref{lem:2}.

\end{proof}

\begin{thm}For $0<\mu<1$, and $\beta>-1$, the   functions
\[\phi_n(x)=x^{\mu}p_n(x;q^{\mu},q^{\beta}|q),\quad n\in\mathbb{N}_0\]
are eigenfunctions of  the qFSLP\eqref{EX1}-- \eqref{BC5} associated to
the eigenvalues
\[\lambda_n:=q^{-n\mu}\dfrac{\Gamma_q(\mu+\beta+n+1)\Gamma_q(\mu+n+1)}{\Gamma_q(\beta+1+n) \Gamma_q(n+1)\Gamma_q(\mu+1)},\;n\in\mathbb{N}_0.\]
Moreover, the eigenfunctions  are unique up to a constant multiplier.
\end{thm}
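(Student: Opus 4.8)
The plan is to verify directly that each $\phi_n(x)=x^{\mu}p_n(x;q^{\mu},q^{\beta}\mid q)$ satisfies the differential equation \eqref{EX1}, and then to read off the eigenvalue $\lambda_n$ and invoke Theorem~\ref{thm:GM} for uniqueness. First I would compute the left-hand operator applied to $\phi_n$ by composing the two corollaries already established. Writing $\phi_n(x)=x^{\mu}p_n(x;q^{\mu},q^{\beta}\mid q)$, which is of the form $x^{\alpha+\mu}p_n(x;q^{\alpha+\mu},q^{\beta-\mu}\mid q)$ with the choice $\alpha=0$, I would apply Corollary~\ref{cor:1} (in the case $\alpha=-\mu$ is not the relevant branch; rather $\alpha=0>-\mu$ branch with the shifted parameters) to obtain
\[
{}^cD_{q,0^+}^{\mu}\phi_n(x)=\frac{\Gamma_q(\mu+1)}{\Gamma_q(1)}\,p_n(x;1,q^{\beta+\mu}\mid q)\cdot(\text{appropriate power of }x),
\]
so that multiplication by $(qx;q)_{\beta+\mu}$ produces a function of exactly the form $(qx;q)_{\beta+\mu}p_n(x;q^{\alpha-\mu},q^{\beta+\mu}\mid q)$ to which Corollary~\ref{cor:2} applies.

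The key step is then to apply Corollary~\ref{cor:2} with parameters matched so that $D_{q,1^-}^{\mu}$ of $(qx;q)_{\beta+\mu}p_n(x;q^{\mu-\mu},q^{\beta+\mu}\mid q)$ returns $(qx;q)_{\beta}p_n(x;q^{0},q^{\beta}\mid q)$ times an explicit ratio of $q$-Gamma factors. After carrying the two composite constants through, the right-hand side should collapse to $\lambda_n\,x^{-\mu}(qx;q)_{\beta}\phi_n(x)$, with $\lambda_n$ being exactly the product of the constant from Corollary~\ref{cor:1} and the constant from Corollary~\ref{cor:2}. I expect this product to simplify to
\[
\lambda_n=q^{-n\mu}\frac{\Gamma_q(\mu+\beta+n+1)\Gamma_q(\mu+n+1)}{\Gamma_q(\beta+1+n)\Gamma_q(n+1)\Gamma_q(\mu+1)},
\]
which matches the claimed eigenvalue once one observes that $x^{\mu}p_n(x;q^{\mu},q^{\beta}\mid q)=x^{-\mu}(qx;q)_{\beta}\cdot[\text{weight}]^{-1}\phi_n$ reconciles the powers of $x$ and the factor $(qx;q)_{\beta}$.

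Next I would check the boundary conditions \eqref{BC5}. The condition $\phi_n(0)=0$ is immediate since $\phi_n(x)=x^{\mu}p_n(x;q^{\mu},q^{\beta}\mid q)$ and $\mu>0$ forces $\phi_n(0)=0$. For the second condition, the inner computation already shows that $(q^{\beta+1}x;q)_{\mu}{}^cD_{q,0^+}^{\mu}\phi_n$ is, up to a constant, $(qx;q)_{\beta+\mu}p_n(x;q^{0},q^{\beta+\mu}\mid q)$, and applying $I_{q,1^-}^{1-\mu}$ via Lemma~\ref{Lem:3} reduces it to a little $q$-Jacobi polynomial evaluated so that the required vanishing at $x=1/q$ follows from the structure of $p_n$ at the endpoint; I would verify this directly from the closed form produced by Lemma~\ref{Lem:3}.

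Finally, uniqueness up to a constant multiplier follows immediately from Theorem~\ref{thm:GM}, since the problem \eqref{EX1}--\eqref{BC5} is a special case of the regular qFSLP \eqref{EVP1}--\eqref{BC2} with $p(x)=(qx;q)_{\beta+\mu}$, $r\equiv 0$, and $w_{\mu}(x)=x^{-\mu}(qx;q)_{\beta}>0$, and the geometric multiplicity of each eigenvalue is $1$. The main obstacle I anticipate is the bookkeeping in matching parameters between the two corollaries and confirming that the accumulated $q$-Gamma constants telescope exactly into the stated $\lambda_n$; in particular one must be careful that the parameter $\alpha$ in Corollary~\ref{cor:1} and Corollary~\ref{cor:2} is instantiated consistently (here effectively $\alpha=0$ after the shift $\alpha\mapsto\mu$ in the statement of the theorem) so that $q^{\alpha-\mu}=q^{0}$ and the weight $(qx;q)_{\beta}$ emerges with the correct index rather than $(qx;q)_{\beta+\mu}$.
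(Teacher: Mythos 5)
Your proposal is correct and is essentially the paper's own proof: the paper likewise applies Corollary~\ref{cor:1} with $\alpha=0$ and $\beta$ replaced by $\beta+\mu$ to get ${}^cD_{q,0^+}^{\mu}\phi_n(x)=\Gamma_q(\mu+1)\,p_n(x;1,q^{\beta+\mu}|q)$, then Corollary~\ref{cor:2} with $\alpha=\mu$, and multiplies the resulting constants to obtain $\lambda_n$; your only slip is writing the output of Corollary~\ref{cor:2} as $(qx;q)_{\beta}p_n(x;q^{0},q^{\beta}|q)$ when it is $(qx;q)_{\beta}p_n(x;q^{\mu},q^{\beta}|q)$, a typo rather than a gap, since the collapse to $\lambda_n x^{-\mu}(qx;q)_{\beta}\phi_n(x)$ that you assert requires exactly the latter. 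Your explicit verification of the boundary conditions \eqref{BC5} and the appeal to Theorem~\ref{thm:GM} for uniqueness up to a constant multiplier are steps the paper leaves implicit, but they follow the same route.
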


\begin{proof}
In Corollary~\ref{cor:1},  set $\alpha=0$ and replace  $\beta$ by  $\beta+\mu$. This gives
\be\label{C:1}
{}^cD_{q,0^+}\left(x^{\mu}p_n(x;q^{\mu},q^{\beta}|q)\right)=\Gamma_q(\mu+1) p_n(x;1,q^{\beta+\mu}|q),
  \ee
In Corollary~\ref{cor:2}, set $\alpha=\mu$, we obtain
 \be\label{C:2} \begin{gathered}D_{q,1^-}^{\mu}\left((qx;q)_{\beta+\mu}p_n(x;1,q^{\beta+\mu}|q)\right)\\
  =q^{-n\mu}\dfrac{\Gamma_q(\mu+\beta+n+1)\Gamma_q(\mu+n+1)}{\Gamma_q(\beta+1+n) \Gamma_q(n+1)\Gamma_q(\mu+1)}
 (qx;q)_{\beta} p_n(x;q^{\mu},q^{\beta}|q).
  \end{gathered}\ee
  Combining \eqref{C:1} and \eqref{C:2} gives the required result.

\end{proof}

\section{Conclusion and future work}
This paper is the first paper introduces  fractional $q$-Sturm--Liouville problems. It proves the main  properties of the eigenvalues
and the eigenfunctions of the fractional Sturm--Liouville problem. We also   give a sufficient condition on an eigenvalue to have a unique eigenfunction, and a sufficient condition on the domain is give for the existence and uniqueness of eigenfunction. The discrete spectrum of a qFSLP is given.
This   work generalizes the study of integer Sturm--Liouville problem introduced by Annaby and Mansour in~\cite{AM1}. It is worth mentioning that a $q$-fractional variational calculus is developed and used in~\cite{Zeinab_FSL2} to prove that the qFSLp \eqref{EVP1} with the boundary condition $y(0)=y(a)=0$ has a countable set of real eigenvalues and associated  orthogonal eigenfunctions
 when  $\alpha>1/2$ and a  similar study for the fractional
Sturm--Liouville problem
\[{}^cD_{q,a^-}p(x)D_{q,0^+}^{\alpha}y(x)+r(x)y(x)=\lambda w_{\alpha}(x)y(x),\]
is in progress.

\end{document}